\author{Layne Hall}
\address{School of Mathematical Sciences, Monash University, Victoria 3800 Australia}
\email{layne.hall@monash.edu}
\author{Andy Hammerlindl}
\address{School of Mathematical Sciences, Monash University, Victoria 3800 Australia}
\urladdr{ http://users.monash.edu.au/~ahammerl/} 
\email{andy.hammerlindl@monash.edu}
\title{Partially hyperbolic surface endomorphisms}
\providecommand{\keyword}[1]
{\textbf{Keywords:} #1}
\providecommand{\codes}[1]
{\textbf{MSC 2000:} #1}
\providecommand{\acknowledgement}
{{\noindent \footnotesize{\textbf{\textsc{Acknowledgements:}}}\quad{The authors thank Jonathan Bowden for helpful proofreading and discussion.}}}
\newcommand\funding[1]{%
	\begingroup
	\renewcommand\thefootnote{}\footnote{#1}%
	\addtocounter{footnote}{-1}%
	\endgroup
}
\theoremstyle{definition}
\theoremstyle{Theorem}
\newtheorem{thm}{Theorem}
\newtheorem{theorem}{Theorem}
\newtheorem{prop}[thm]{Proposition}
\theoremstyle{Theorem}
\newtheorem{lem}[thm]{Lemma}
\theoremstyle{Theorem}
\theoremstyle{Theorem}
\theoremstyle{Theorem}
\newtheorem{cor}[thm]{Corollary}
\theoremstyle{remark}
\DeclareMathOperator{\eps}{\varepsilon}
\DeclareMathOperator{\bbT}{\mathbb{T}}
\DeclareMathOperator{\bbR}{\mathbb{R}}
\DeclareMathOperator{\bbZ}{\mathbb{Z}}
\DeclareMathOperator{\Cone}{\mathcal{C}}
\DeclareMathOperator{\bbN}{\mathbb{N}}
\DeclareMathOperator{\bbS}{\mathbb{S}}
\def\restrict#1{\raise-.5exhbox{\ensuremath|}_{#1}}
\numberwithin{thm}{section}
\numberwithin{equation}{section}
\numberwithin{figure}{section}
\begin{document}
		\maketitle
		\begin{abstract}
			We prove that a class of weakly partially hyperbolic endomorphisms on $\bbT^2$ are dynamically coherent and leaf conjugate to linear toral endomorphisms. Moreover, we give an example of a partially hyperbolic endomorphism on $\bbT^2$ which does not admit a centre foliation.\\
			
		\noindent \keyword{Partial hyperbolicity, Non-invertible dynamics, Dynamical coherence.}\\
		\noindent\codes{37C05, 37D30, 57R30.}\\
		
		\end{abstract}
	\section{Introduction}
	\funding{This work was partially funded by the Australian Research Council.}
	Partial hyperbolicity has been extensively studied as a mechanism for robust dynamical behaviour of diffeomorphisms. Weakly partially hyperbolic diffeomorphisms are particularly well understood in dimension 2, where a classification has been established. Compared to diffeomorphisms, the dynamics of non-invertible surface maps are less understood. In this paper, we study partially hyperbolic surface endomorphisms. We give a classification of a particular class of these maps up to leaf conjugacy. We also give an example from a different class which does not admit a centre foliation, showing that not all partially hyperbolic surface endomorphisms can be classified up to leaf conjugacy.
	
	We begin by recalling the definition of partial hyperbolicity in the invertible setting. Let $M$ be a compact connected manifold. A diffeomorphism $f:M\to M$ is \emph{(weakly) partially hyperbolic} if there exists a splitting of the tangent bundle
	\[ TM = E^c \oplus E^u \]
	which is invariant under $Df$, and 	such that for all $p\in M$ and all unit vectors $v^c \in E^c(p)$ and $v^u \in E^u(p)$,
	
	\begin{equation*}
	1 <\| Df^nv^u \| \quad \text{ and }\quad \|Df^nv^c\|<\|Df^nv^u\|. \label{ph}
	\end{equation*}

	In the non-invertible setting, it is more natural to define partial hyperbolicity in terms of cone families rather than an invariant splitting. A \emph{cone family} $\Cone \subset TM$ consists of a closed convex cone $\Cone(p) \subset T_p M$ at each point in $p \in M$. A cone family is \emph{$Df$-invariant} if $D_p f \left(\Cone(p)\right)$ is contained in the interior of $\Cone(f(p))$ for all $p\in M$. A map $f:M\to M$ is a \emph{(weakly) partially hyperbolic endomorphism} if it is a local diffeomorphism and it admits a cone family $\Cone^u$ which is $Df$-invariant and such that $1 < \| Df v^u \|$
	for all $v^u \in \Cone^u$. We call $\Cone^u$ an unstable cone-family. This definition of partial hyperbolicity coincides with that for diffeomorphisms when $f$ is invertible. In the non-invertible setting, admitting an unstable cone family is a robust property, whereas admitting an invariant splitting is not. Our focus is on surface endomorphisms, so let $M$ be two-dimensional. It can be shown that an unstable cone-family implies the existence of a centre direction, that is, a continuous $Df$-invariant line field $E^c\subset TM$ \cite[Section 2]{cropot2015lecture}.
	If we assume $M$ is orientable, the existence of $E^c$ implies that $M=\bbT^2$.

	A problem that arises when classifying diffeomorphisms is whether or not there exists a foliation tangent to the centre direction. Even a one-dimensional centre bundle does not necessarily integrate to a foliation. This is demonstrated in \cite{RHRHU-incoherent}, where they present a partially hyperbolic diffeomorphism on $\bbT^3$ which only admits an invariant centre \emph{branching foliation}, a collection of immersed surfaces which cover the manifold and do not pairwise topologically cross. In our setting, we say a partially hyperbolic endomorphism of $\bbT^2$ is \emph{dynamically coherent} if there exists an $f$-invariant foliation tangent to $E^c$.
	
	Suppose that $f,\, g:\bbT^2 \to \bbT^2$ are partially hyperbolic endomorphisms which are dynamically coherent. We say that  $f$ and $g$ are \emph{leaf conjugate} if there exists a homeomorphism $h: \bbT^2 \to \bbT^2$ which takes centre leaves of $f$ to centre leaves of $g$, and which satisfies
	\[
	h(f(\mathcal{L})) = g(h(\mathcal{L})).
	\]
	In \cite[Section 4A]{potrie}, it is proved that a partially hyperbolic diffeomorphism of $\bbT^2$ is dynamically coherent. Further, it is known that any partially hyperbolic diffeomorphism of $\bbT^2$ is leaf conjugate to a linear toral automorphism. This known classification is not precisely stated anywhere, but it is contained within Theorem \ref{coherence} of this paper.
	
	A partially hyperbolic endomorphism $f:\bbT^2\to\bbT^2$ induces a homomorphism of the fundamental group $\pi_1(\bbT^2) \cong \bbZ^2$. There is a unique linear toral endomorphism $A: \bbT^2 \to \bbT^2$ which induces the same homomorphism of $f$, and we call $A$ the \emph{linearisation} of $f$. One can show that $A$ is one of three types based on the eigenvalues $\lambda_1$ and $\lambda_2$ of the matrix inducing $A$. We give these types the following names:
	\begin{itemize}
		\item if $|\lambda_1| < 1 < |\lambda_2|$, we say $A$ is \emph{hyperbolic} if, 
		\item if $1<|\lambda_1| \leq |\lambda_2|$, we say $A$ \emph{expanding}, and
		\item if $1<|\lambda_1|$ and $|\lambda_2| = 1$, we say $A$ is \emph{non-hyperbolic}.
	\end{itemize}
	We will often relate the behaviour of $f$ to that of $A$. If $A$ is hyperbolic or expanding, then by the work of Franks \cite{Franks1}, $f$ is semiconjugated to $A$.
	
	The linearisation of a diffeomorphism is necessarily hyperbolic, and the known classification can be stated in terms of the linearisation: a partially hyperbolic diffeomorphism on $\bbT^2$ is dynamically coherent and leaf conjugate to its linearisation. We extend this classification to any partially hyperbolic endomorphism with hyperbolic linearisation.

	\begin{theorem}\label{coherence}
		Let $f: \bbT^2\to\bbT^2$ be a partially hyperbolic endomorphism, and suppose that its linearisation $A$ is hyperbolic. Then $f$ is dynamically coherent.	
	\end{theorem}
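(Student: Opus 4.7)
The plan is to lift everything to the universal cover, construct a branching foliation tangent to the lifted centre bundle, and then use the hyperbolic linearisation to upgrade it to a genuine foliation. Since $f$ is a local diffeomorphism of the compact surface $\bbT^2$, it is a covering map, so any lift $F\colon \bbR^2\to\bbR^2$ is a $C^1$ diffeomorphism, and the centre bundle lifts to a continuous, $\bbZ^2$-equivariant, $DF$-invariant line field $\tilde E^c$ on $\bbR^2$, transverse to a lifted unstable cone family $\tilde\Cone^u$. As $A$ is hyperbolic, Franks' theorem provides a semiconjugacy from $f$ to $A$, which lifts to a surjection $H\colon \bbR^2\to\bbR^2$ satisfying $H\circ F = A\circ H$ with $H-\Id$ bounded.

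Following a Burago--Ivanov style construction I would first produce a branching foliation $\widetilde{\mathcal{F}}$ tangent to $\tilde E^c$: approximate $\tilde E^c$ by smooth $\bbZ^2$-invariant line fields, integrate each to a smooth foliation, and pass to a Hausdorff limit on compacta. Using $\bbZ^2$-invariance for uniformity, this yields a $\bbZ^2$-invariant family of properly embedded $C^1$ curves tangent to $\tilde E^c$, with at least one leaf through every point of $\bbR^2$, and any two leaves either coinciding or not topologically crossing.

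The crux is to rule out genuine branching in $\widetilde{\mathcal{F}}$. Using the transversality of $\tilde E^c$ to $\tilde\Cone^u$ and forward expansion of unstable curves, one shows that two points lying on a common centre leaf must stay within bounded distance under all forward iterates of $F$. Combined with $H\circ F^n = A^n\circ H$, boundedness of $H-\Id$, and the hyperbolic dichotomy for $A$, this forces $H$ to collapse each centre leaf to a single point on a linear stable leaf of $A$. A putative pair of branching leaves through a common point would then bound an open region which on the one hand contains unstable curves whose length is amplified exponentially under $F^n$, and on the other hand is constrained by $H$ to lie in a bounded piece of $\bbR^2$; this comparison produces the required contradiction, much as in Potrie's proof in the diffeomorphism setting. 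Once branching is excluded, $\widetilde{\mathcal{F}}$ is a genuine foliation and descends by $\bbZ^2$-equivariance to an $f$-invariant foliation on $\bbT^2$ tangent to $E^c$.

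The main obstacle I anticipate is carrying the no-branching step through in the non-invertible setting. Although the lift $F$ is a diffeomorphism of $\bbR^2$, the inverse $F^{-1}$ does not preserve $\tilde\Cone^u$, so every dynamical estimate has to be run using only forward iteration. The delicate point is therefore to verify that $H$ truly collapses centre leaves, and that the exponential stretching of unstable curves can be played against the bounded geometry imposed by $H$, without any recourse to backward cone invariance.
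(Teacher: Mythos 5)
Your route differs from the paper's: you build a static, Burago--Ivanov style branching foliation tangent to $\tilde E^c$ and then try to rule out branching, whereas the paper never considers arbitrary integral curves of $E^c$ at all -- it manufactures a specific candidate family $\mathcal{F}^c$ as limits of backward iterates $f^{-n}(\mathcal{F}^\eps(f^n(p)))$ of one carefully chosen approximating foliation, and only proves disjoint-or-equal for that family. This difference is not cosmetic, and it is exactly where your argument has a genuine gap. Your no-branching step rests on the claim that two points on a common centre leaf stay at bounded distance under all forward iterates of $F$ (equivalently, that $H^u=\pi^u\circ H$ is constant on centre leaves, so that the region between two branching leaves stays bounded in the $\pi^u$-direction under iteration). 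You assert this follows from transversality of $\tilde E^c$ to $\tilde\Cone^u$ and forward expansion of unstable curves, but those ingredients do not yield it: transversality to the cone is a pointwise condition and does not prevent a centre curve from stretching, at large scale, along the unstable eigendirection of $A$; a length-versus-volume estimate gives no contradiction either, since such a curve would lie in a strip of bounded $\pi^s$-width whose unit neighbourhood has volume comparable to its length. In the paper the corresponding property (\cref{closetostable}) is not proved for arbitrary centre curves; it is built into the construction, via \cref{notunstable} (the approximating foliation is chosen close to a rational line $L\notin\mathcal{A}^u$) and \cref{limitneighbourhood} (hyperbolicity of $A$ pushes $A^{-n}(L)$ to the stable eigenline, so the backward limits lie at bounded distance from stable lines). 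For the leaves of your statically constructed branching foliation no such control is available, so the ``bounded region versus exponentially growing unstable curve'' comparison is not justified; in effect you would be proving unique integrability of $E^c$, which is stronger than what the paper establishes and is precisely the point your cited ingredients do not reach.

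Two smaller issues: the phrase ``$H$ collapses each centre leaf to a single point on a linear stable leaf'' is false as stated (since $H-\Id$ is bounded, the image of a properly embedded leaf is unbounded); what is true, and what the paper proves in \cref{bijection}, is that $H$ maps each constructed centre leaf into a single stable line, i.e.\ $H^u$ is constant on it. Also, your branching foliation, built without dynamics, is not automatically $f$-invariant, so even after excluding branching you would still need an argument (e.g.\ unique integrability, or an invariant construction as in the paper) to obtain the $f$-invariant foliation required by the definition of dynamical coherence. Finally, note that converting exponential growth of unstable length into growth of $\pi^u$-displacement is itself nontrivial: the paper needs \cref{uniquehit}, \cref{lengthvolume}, \cref{closetounstable} and \cref{largeunstable} for this, and your sketch elides that step as well.
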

	
	\begin{theorem}\label{leafconj}
		Let $f: \bbT^2\to\bbT^2$ be a partially hyperbolic endomorphism, and suppose that its linearisation $A$ is hyperbolic. Then $f$ is leaf conjugate to $A$.	
	\end{theorem}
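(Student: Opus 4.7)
The plan is to follow the classical Franks--Manning strategy for leaf conjugacy, with the key observation that on the universal cover the endomorphism behaves just like a diffeomorphism. First I would lift everything: let $\tilde{f}\colon \bbR^2 \to \bbR^2$ be a lift of $f$ and let $\tilde{A}$ be the corresponding linear lift, so that $\tilde{f} - \tilde{A}$ is $\bbZ^2$-periodic and hence bounded. Because $\tilde{A}$ is a linear isomorphism of $\bbR^2$ (since $\det A \neq 0$), the map $\tilde{f}$ is a proper local diffeomorphism, and therefore a homeomorphism of $\bbR^2$. Franks' theorem then produces a continuous $\tilde{H}\colon \bbR^2 \to \bbR^2$ with $\tilde{H}\circ \tilde{f} = \tilde{A}\circ \tilde{H}$, commuting with $\bbZ^2$-translations, and satisfying $\sup \|\tilde{H} - \Id\| < \infty$.

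The first substantive step is a global product structure: each leaf of the lifted centre foliation $\tilde{\mathcal{F}}^c$ (produced by Theorem \ref{coherence}) lies at bounded Hausdorff distance from a unique line parallel to the weak eigenspace $E^c_{\tilde A}$. This follows from the $Df$-invariant unstable cone field together with $\|\tilde{f} - \tilde{A}\| = O(1)$: if a centre leaf deviated arbitrarily far in the $\tilde A$-unstable direction across a bounded $E^c_{\tilde A}$-span, iterating $\tilde{f}$ would stretch unstable-cone arcs exponentially and the leaf would escape every tubular neighbourhood, contradicting transversality of the cone to $E^c_{\tilde A}$. Writing $\ell_L$ for the line associated to the centre leaf $L$, one then shows via the bound on $\tilde{H} - \Id$ and the semiconjugacy that $\tilde{H}(L) = \ell_L$, and that distinct centre leaves produce distinct lines.

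Given this bijection $L \leftrightarrow \ell_L$ between leaf spaces, I would define $\tilde{h}\colon \bbR^2 \to \bbR^2$ by letting $\tilde{h}(x)$ be the projection of $x$ onto $\ell_{L_x}$ along $E^u_{\tilde A}$, where $L_x$ is the centre leaf through $x$. This projection is well defined and finite because $L_x$ and $\ell_{L_x}$ are at bounded Hausdorff distance. The map $\tilde{h}$ is a homeomorphism: its restriction to each centre leaf is a homeomorphism onto $\ell_{L_x}$ because the centre tangent direction of $f$ lies outside the $\tilde A$-unstable cone, so the projection is everywhere a local homeomorphism and hence a global one; between leaves, continuity follows from continuous dependence of $L_x$ on $x$. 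Equivariance of $\tilde{h}$ under $\bbZ^2$-translations lets it descend to $h\colon \bbT^2 \to \bbT^2$. The relation $\tilde{A}(\ell_L) = \ell_{\tilde{f}(L)}$, obtained by applying $\tilde{A}$ to $\tilde{H}(L) = \ell_L$ and invoking the semiconjugacy, then gives $h(f(\mathcal{L})) = A(h(\mathcal{L}))$ on each centre leaf $\mathcal{L}$.

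I expect the main obstacle to be the leaf-space injectivity in the previous paragraph, namely that $\tilde{H}$ does not merge two centre leaves of $f$ into a single line of $\tilde{A}$. In the diffeomorphism setting this is handled using the global product structure together with monotonicity of the semiconjugacy along a transverse unstable arc, and since $\tilde{f}$ is a homeomorphism of $\bbR^2$ and $\tilde{A}$ is a linear isomorphism, that argument should go through here with only small modifications. The subtlety is to verify that the $\bbZ^2$-equivariance of $\tilde{H}$ interacts correctly with the induced non-injective action of $A$ on the quotient, i.e., that the deck-transformation permutation of centre leaves is compatible with the leaf pairing $L \mapsto \ell_L$; once this is checked, everything descends cleanly to $\bbT^2$.
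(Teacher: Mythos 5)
Your overall skeleton matches the paper's strategy up to a point: lift to $\bbR^2$, use the Franks semiconjugacy $H$ at finite distance from the identity, show that $H$ sends each centre leaf into a stable line of $A$ and that this induces a bijection between the centre leaf space and the lines of $\mathcal{A}^s$ (this is \cref{bijection} in the paper), and then build the leaf conjugacy leafwise and let it descend to $\bbT^2$. The gap is in how you pass from the leaf pairing $L \leftrightarrow \ell_L$ to an actual homeomorphism. You define $\tilde h$ on a centre leaf $L$ as the projection onto $\ell_L$ along the $A$-unstable eigendirection, and you justify injectivity by asserting that ``the centre tangent direction of $f$ lies outside the $\tilde A$-unstable cone.'' That assertion is not justified and is false in general: the centre direction $E^c$ of $f$ is only known to be transverse to the $Df$-invariant unstable \emph{cone family of $f$}, which has no pointwise relation to the eigendirections of the linearisation $A$. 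At individual points a centre leaf can be tangent to (or cross back and forth over) translates of the unstable eigenline $\mathcal{A}^u$, so $\pi^s$ restricted to a centre leaf --- which is exactly what your projection computes --- need not be even locally injective. The relation between centre leaves and $\mathcal{A}^s$ is only a large-scale one (forward iterates of centre leaves have uniformly bounded $\pi^u$-extent, \cref{closetostable}); it gives no pointwise transversality, so your ``local homeomorphism, hence global'' step collapses.

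This is precisely the difficulty the paper's proof is organised around. Instead of projecting, it uses Fuller's averaging trick: with $T$ as in \cref{increase} (a centre arc of length $T$ moves more than $1$ in the $\pi^s$-coordinate), one defines $\pi^s h(p)$ as the average of $\pi^s$ over the centre arc of length $T$ starting at $p$. Even though $\pi^s$ itself may oscillate along the leaf, the averaged function has derivative at least $1/T$ along the leaf by the fundamental theorem of calculus, so $h$ restricted to each leaf is a $C^1$ diffeomorphism onto $H(L)=\ell_L$, and the semiconjugacy relation $H\circ f = A\circ H$ still gives $h(f(\mathcal{L}))=A(h(\mathcal{L}))$ at the level of leaves. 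To repair your argument you would either need to prove monotonicity of $\pi^s$ along centre leaves (which is not available) or replace the naive projection by such an averaged map; as written, the construction of the homeomorphism --- the heart of the theorem --- does not go through.
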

	
	We prove Theorems A and B by working on the universal cover $\bbR^2$. In this setting, some of the difficulties when working with non-invertible maps---such as the inability to take preimages or the use of an unstable direction---are alleviated.
	
	In \cref{coherenceconstruction} we prove \cref{coherence} using techniques similar to the invertible setting. We begin with an approximating foliation on the universal cover, and take large backward iterates under the lifted endomorphism. Using the hyperbolic linearisation, we show that the resulting limit is a centre foliation.
	
	We establish \cref{leafconj} in \cref{lcsection} by taking an average of the Franks semiconjugacy along centre leaves to obtain a leaf conjugacy. This averaging technique was first used by Fuller \cite{fuller1965}, and an outline of how this method can be used to establish a leaf conjugacy for partially hyperbolic diffeomorphisms is given in \cite[Section 8]{hp-survey}.
	
	The techniques used to prove Theorems \ref{coherence} and \ref{leafconj} are specific to endomorphisms with a hyperbolic linearisation. We show that in the case of a non-hyperbolic linearisation, incoherence is possible.
	\begin{theorem}\label{incoherence}
		There exists a partially hyperbolic endomorphism $f: \bbT^2 \to \bbT^2$ with non-hyperbolic linearisation that is dynamically incoherent.	
	\end{theorem}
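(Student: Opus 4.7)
The plan is constructive: exhibit an explicit partially hyperbolic endomorphism $f:\bbT^2 \to \bbT^2$ with non-hyperbolic linearisation and verify directly that it admits no $f$-invariant centre foliation. A natural starting point is the linearisation $A(x,y) = (2x,\, y) \bmod 1$, whose eigenvalues $2$ and $1$ make $A$ non-hyperbolic; I would seek $f$ as a smooth perturbation of skew-product type $f(x,y) = (2x,\, y + \phi(x,y)) \bmod 1$, with $\phi$ small. For such $f$, partial hyperbolicity is routine: a narrow horizontal cone is $Df$-invariant and uniformly expanded by $Df$, and $f$ is a local diffeomorphism provided $1 + \partial_y\phi$ never vanishes. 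All the interesting work is in the choice of $\phi$.

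The key step is ruling out $f$-invariant foliations. I would pass to the universal cover, where the lift $\tilde f: \bbR^2 \to \bbR^2$ is an actual diffeomorphism (as $\det A = 2 \neq 0$) and satisfies the intertwining relation $\tilde f \circ T_v = T_{\tilde A v} \circ \tilde f$ for $v \in \bbZ^2$, where $T_v$ denotes translation by $v$ and $\tilde A$ the matrix inducing $A$. An $f$-invariant centre foliation on $\bbT^2$ corresponds exactly to a foliation on $\bbR^2$ tangent to the lifted direction $E^c$ which is simultaneously $\tilde f$-invariant and $\bbZ^2$-invariant; in particular, it must be invariant under the horizontal translation $T_{(1,0)}$. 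The plan is therefore to choose $\phi$ so that no foliation tangent to $E^c$ is simultaneously $\tilde f$-invariant and $T_{(1,0)}$-invariant. Heuristically, the $\tilde f$-invariance forces the leaf through any given point to take a specific asymptotic shape under forward iteration by $\tilde f$, while $T_{(1,0)}$-invariance forces a periodicity in the first coordinate that should be incompatible with this asymptotic shape once $\phi$ is chosen asymmetrically enough.

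The main obstacle is the absence of uniqueness for $\tilde f$-invariant foliations tangent to $E^c$. Unlike in the hyperbolic case of \cref{coherence}, the centre dynamics of $A$ is an isometry, so the backward contraction that yields uniqueness of the limit foliation there is unavailable, and the argument must contend with a potentially large family of candidate $\tilde f$-invariant foliations. The plan is to tune $\phi$ so that $E^c$ is $C^0$ but not Lipschitz along a specific curve---generating genuine Peano-type non-uniqueness of integral curves---and then to parametrise the resulting family of $\tilde f$-invariant foliations tangent to $E^c$ explicitly enough to verify that each member fails to be invariant under $T_{(1,0)}$. Carrying out this explicit parametrisation while keeping the verification of partial hyperbolicity simple enough to do by hand is the technical heart of the proof, and I expect the required $\phi$ to involve a carefully chosen asymmetry in its dependence on $x$ so that horizontal translation does not commute with the $\tilde f$-selected integral curves.
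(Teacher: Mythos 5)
There is a genuine gap, and it is structural rather than a matter of missing details. For any map of your proposed form $f(x,y) = (2x,\, y+\phi(x,y))$ the derivative is lower triangular,
$Df = \begin{pmatrix} 2 & 0 \\ \partial_x\phi & 1+\partial_y\phi \end{pmatrix}$,
so the vertical line field is $Df$-invariant and grows at rate $|1+\partial_y\phi|\approx 1$; it lies outside your horizontal unstable cone and is therefore exactly the centre direction $E^c$. The foliation of $\bbT^2$ by vertical circles $\{x=\mathrm{const}\}$ is tangent to this $E^c$ and is $f$-invariant (each vertical circle maps into the vertical circle over $2x$). Hence every member of your family is dynamically coherent, and no choice of $\phi$ --- however asymmetric in $x$ or non-Lipschitz --- can yield the example demanded by \cref{incoherence}. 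To destroy coherence the shear must go the other way: the perturbation has to enter the \emph{expanding} coordinate as a function of the centre coordinate (upper-triangular derivative), and the dynamics in the centre coordinate must itself be non-trivial. The paper takes $f(x,y) = (2x+\cos(2\pi y)+1,\,\Psi(y))$ with $\Psi$ Morse--Smale, $\Psi'(0)<\tfrac12$ and $\Psi'(\tfrac12)>2$, so that the circle $y=\tfrac12$ repels in the centre coordinate faster than the factor $2$ in the base.

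Beyond this, your strategy for ruling out invariant foliations is left entirely unexecuted and aims at a weaker conclusion than what is actually available. You propose to parametrise all $\tilde f$-invariant foliations tangent to $E^c$ and check that each fails $T_{(1,0)}$-invariance; you acknowledge this is the technical heart, but no mechanism is given, and controlling the whole family of invariant foliations is precisely the hard part you would need to supply. The paper sidesteps this entirely: it solves the cohomological equation $u(\Psi(y))-2u(y)=\cos(2\pi y)+1$ to write down $E^c$ explicitly (spanned by $(\gamma'(y),1)$, horizontal on $y=\tfrac12$), computes \emph{all} integral curves of $E^c$, and observes that $\gamma'<0$ on $(0,\tfrac12)$ and $\gamma'>0$ on $(\tfrac12,1)$ forces every centre curve to merge into the invariant circle $y=\tfrac12$, producing a branching foliation as in \cref{branching}. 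Consequently no foliation tangent to $E^c$ exists at all, invariant or not, so neither $\tilde f$-invariance nor invariance under deck transformations ever needs to be discussed.
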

	Our example is inspired by the partially hyperbolic diffeomorphism of $\bbT^3$ constructed in \cite{RHRHU-incoherent}, where branching of centre curves occurs on an invariant 2-torus tangent to $E^s \oplus E^c$. Similarly, our example has branching of centre curves occur on a circle tangent to $E^c$. We give the construction of the incoherent endomorphism in \cref{incosection}. \cref{incoherence} shows that not all partially hyperbolic endomorphisms on $\bbT^2$ can be classified up to leaf conjugacy. It also gives an example of a partially hyperbolic endomorphism on $\bbT^2$ which exhibits interesting behaviour similar to a 3-dimensional diffeomorphism, but which is not possible for a 2-dimensional diffeomorphism.

	\section{Dynamical coherence}
		\label{coherenceconstruction}
		In this section we prove \cref{coherence}. Let $f_0: \bbT^2 \to \bbT^2$ be a partially hyperbolic endomorphism and $A_0: \bbT^2 \to \bbT^2$ be its linearisation. Suppose that $A_0$ is hyperbolic in the sense defined in the introduction. Lift $f_0$ and $A_0$ to $f: \bbR^2 \to \bbR^2$ and $A: \bbR^2 \to \bbR^2$. As $f_0$ and $A_0$ induce the same homomophism on $\pi^1(\bbT^2)$, one can show that the lifted maps $f$ and $A$ are a finite distance apart. The map $f$ is a diffeomorphism which admits a splitting $E^c \oplus E^u$ \cite[Section 2]{manepughendo}. The unstable direction $E^c$ integrates to a foliation $\mathcal{F}^u$ of $\bbR^2$ by lines, and this foliation does not necessarily descend to $\bbT^2$ \cite[Section 2]{przytycki}. We will work on the universal cover to construct a centre foliation which descends to $\bbT^2$. Let $\mathcal{A}^s$ be the foliation of $\bbR^2$ whose leaves are the stable eigenlines of $A$, and similarly define $\mathcal{A}^u$. 
		
	 	Let $E^c_0$ be the centre direction of $f_0$ on $\bbT^2$. To construct a centre foliation, we first approximate $E^c_0$ by an $\eps$-close smooth distribution $E^{\eps}_0$ which remains transverse to the unstable cone family. Since $E^{\eps}_0$ is smooth, it integrates to a foliation of $\bbT^2$. We can lift this foliaton to a foliation $\mathcal{F}^{\eps}$ of $\bbR^2$. Any foliation lifted from $\bbT^2$ has leaves that are uniformly close to leaves of a foliation by lines (see Lemma 4.A.2 of \cite{potrie}). For $\mathcal{F}^{\eps}$, we can ensure this foliation of lines is not $\mathcal{A}^u$.
	 	
		\begin{lem}\label{notunstable}
			The distribution $E^{\eps}_0$ can be chosen so that the resulting foliation $\mathcal{F}^{\eps}$ has leaves at a finite distance from a line $L$ which is not in $\mathcal{A}^u$.	
		\end{lem}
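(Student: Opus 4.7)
The plan is as follows. I would begin with any smooth line field $\tilde E$ on $\bbT^2$ that is $\eps$-close to $E^c_0$ in the $C^0$ sense; such a $\tilde E$ exists by standard smoothing techniques, for instance convolution in coordinate charts against a suitable bump. By compactness of $\bbT^2$ and the fact that $E^c_0$ takes values in the open complement of $\Cone^u$, taking $\eps$ small enough ensures $\tilde E$ is transverse to $\Cone^u$ as well. The foliation tangent to $\tilde E$ lifts to a foliation of $\bbR^2$ whose leaves, by Lemma 4.A.2 of \cite{potrie}, lie at uniformly bounded Hausdorff distance from parallel lines in some direction $L$. If $L$ is not parallel to the unstable eigendirection of $A$, we take $E^{\eps}_0 = \tilde E$ and are done.

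If instead $L$ is parallel to the unstable eigendirection of $A$, I would perturb $\tilde E$ by a small pointwise rotation. Explicitly, define $\tilde E_\theta(p) := R_\theta(\tilde E(p))$, where $R_\theta$ denotes rotation by the angle $\theta$ in the tangent plane (well-defined globally since $\bbT^2$ is a flat torus). For $|\theta|$ sufficiently small, $\tilde E_\theta$ remains transverse to $\Cone^u$ and close to $E^c_0$. Let $L_\theta$ denote the asymptotic direction of the lifted foliation of $\tilde E_\theta$. The key claim is that $L_\theta$ varies non-trivially with $\theta$; heuristically $L_\theta$ should rotate by $\theta$ relative to $L_0$. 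Granting this, since the unstable eigendirection is a single fixed direction, one selects a small nonzero $\theta$ with $L_\theta$ not parallel to it and sets $E^{\eps}_0 := \tilde E_\theta$.

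The main obstacle is rigorously establishing that $L_\theta$ rotates with $\theta$ (or at least that it is non-constant), since the leaves of the perturbed foliation follow globally different paths than those of the unperturbed one, despite having locally rotated tangent vectors. The intuition is that rotating every tangent vector by $\theta$ rotates the accumulated displacement along a long leaf segment by $\theta$, and the asymptotic direction is the limit of such displacements. To make this precise, I would either express $L_\theta$ as an ergodic average of the direction field against an invariant transverse measure of the foliation, which manifestly rotates by $\theta$ under the perturbation, or alternatively give a direct geometric estimate by integrating tangent vectors along long leaf segments and using compactness of $\bbT^2$ to show that the displacement direction stabilises to the claimed rotated value.
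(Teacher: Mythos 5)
Your approach is genuinely different from the paper's, and its central step is left as an acknowledged gap that your two suggested repairs do not close. The paper avoids the issue entirely: by Pugh's closing lemma one perturbs $E^{\eps}_0$ so that the approximating foliation has a closed leaf; a closed leaf forces the asymptotic line $L$ to have rational slope, while hyperbolicity of the integer matrix $A$ forces the eigenlines of $\mathcal{A}^u$ to have irrational slope, so $L\notin\mathcal{A}^u$ with no further work. Your first paragraph is consistent with this setup, but your second paragraph hinges on the claim that the asymptotic direction $L_\theta$ moves with $\theta$, and this is exactly where the difficulty lies.

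Concretely: the heuristic ``$L_\theta$ rotates by $\theta$'' is false in general. The direction $L_\theta$ behaves like a rotation number of the holonomy return map on a closed transversal, and such rotation numbers exhibit mode-locking: as a function of a monotone parameter they form a devil's staircase, locally constant on intervals where the value is rational (a nearby hyperbolic closed leaf persists under the pointwise rotation, pinning $L_\theta$). Your first proposed fix, expressing $L_\theta$ as an average against an invariant transverse measure, does not ``manifestly rotate'' either, because the invariant measure itself depends on $\theta$; the integrand rotates but the measure moves, and the net effect can vanish. What is actually true, and what your argument needs, is the weaker statement that $\theta\mapsto L_\theta$ cannot be locally constant at a direction of \emph{irrational} slope: one must (i) rule out compact leaves and Reeb components when $L_\theta$ is irrational so that a closed transversal and return map exist, (ii) prove a comparison lemma showing that a pointwise positive rotation of the line field strictly increases the return map, and (iii) invoke strict monotonicity of the rotation number at irrational values. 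Note that this completed argument must use the irrationality of the unstable eigendirection (a consequence of $A$ being a hyperbolic integer matrix), the same fact the paper uses; if the target direction were rational, locking could keep $L_\theta$ equal to it on a whole interval of $\theta$ and your strategy would genuinely fail. As written, the proposal identifies the obstacle but does not supply the argument, so the proof is incomplete; with points (i)--(iii) filled in it would become a correct, though considerably longer, alternative to the paper's closing-lemma argument.
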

		\begin{proof}
			By Pugh's $C^1$ closing lemma, up to a small perturbation, we can choose $E^{\eps}_0$ to have a periodic orbit. Then $\mathcal{F}^{\eps}$ must have a leaf which descends to a circle in $\bbT^2$. This leaf then lies a finite distance from line of rational slope. One can show that since $A$ hyperbolic, the lines of $\mathcal{A}^u$ have irrational slope, so $L \notin \mathcal{A}^u$. 
		\end{proof}
		
		The centre foliation is obtained by iterating $\mathcal{F}^{\eps}$ backward. For each $n$, define a foliation $\mathcal{F}_n$ by setting $\mathcal{F}_n(p) = f^{-n}(\mathcal{F}^{\eps}(f^n(p)))$. The sequence of leaves $\mathcal{F}_n(p)$ is a sequence of embedded copies of $\bbR$ through $p$ whose tangent lines $T\mathcal{F}_n(p)$ converge exponentially fast to the centre direction. An Arzela-Ascoli argument shows that $\mathcal{F}_n(p)$ has a convergent subsequence in the compact-open topology. Further, the limit of this subsequence is an embedding of $\bbR$ through $p$ tangent to $E^c$. Any sequence of leaves in $\mathcal{F}_n$ can have several subsequences which have a limit; we let $\mathcal{F}^c$ be the collection of all such limits of all leaves. It follows from this definition that the collection is $f$-invariant. We will show that curves in $\mathcal{F}^c$ are either disjoint or coincide, in turn implying that $\mathcal{F}^c$ is a foliation. Since $f$ is close to $A$ and each leaf of $\mathcal{F}^{\eps}$ is close to a line $L$, we consider how $A$ maps $L$. Let $L_n = A^{-n}(L)$, and observe that $L_n$ approaches the stable eigenline of $A$ for large $n$. We can use this to show that leaves of $\mathcal{F}_n$ are close to $L_n$. This allows us to establish that every leaf of $\mathcal{F}_n$ lies close to $L_n$.
		\begin{lem} \label{limitneighbourhood}
			There exists $R>0$ such that for all large $n$, each leaf of $\mathcal{F}_n$ lies in an $R$ neighbourhood of a translate of the line $L_n$.
		\end{lem}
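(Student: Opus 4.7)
The plan is to move the bounded-distance estimate on $\mathcal{F}^{\eps}$ backward through $n$ iterates using the Franks semiconjugacy for $f$ and $A$, then exploit hyperbolicity of $A$ together with $L\notin\mathcal{A}^u$ to see that pulling back by $A^{-n}$ only contracts transverse displacements to translates of $L$.

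Since $A$ is hyperbolic and $f$ is a bounded distance from $A$ on $\bbR^2$, the Franks semiconjugacy yields a continuous $H:\bbR^2\to\bbR^2$ with $H\circ f=A\circ H$ and $\|H(x)-x\|\leq C$ for all $x$, for some constant $C$. By \cref{notunstable}, there is a uniform $K$ so that each leaf of $\mathcal{F}^{\eps}$ lies in the $K$-neighbourhood of some translate $L+v$ of $L$. Fix $p\in\bbR^2$, set $\gamma^{\eps}=\mathcal{F}^{\eps}(f^n(p))$, and choose $v$ so that $\gamma^{\eps}$ lies within $K$ of $L+v$. For any $y\in\mathcal{F}_n(p)$ we have $f^n(y)\in\gamma^{\eps}$, and the conjugacy identity gives $A^n(H(y))=H(f^n(y))$. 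Then $H(f^n(y))$ lies within distance $K+C$ of $L+v$, so $H(y)=A^{-n}(H(f^n(y)))$ lies within some distance $d_n$ of $A^{-n}(L+v)=L_n+A^{-n}(v)$.

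To bound $d_n$, I work in the eigenbasis of $A$, with eigenvalues $|\lambda_s|<1<|\lambda_u|$. Writing the unit direction of $L$ as $(a,b)$, the hypothesis $L\notin\mathcal{A}^u$ forces $a\neq 0$. Decomposing the vector from $v$ to $H(f^n(y))$ as $c_1(a,b)+c_2(-b,a)$ with $|c_2|\leq K+C$, applying $A^{-n}=\mathrm{diag}(\lambda_s^{-n},\lambda_u^{-n})$, and projecting onto the unit normal to $L_n$, the $c_1$ contribution vanishes and the transverse displacement works out to $|c_2|\,|\lambda_s\lambda_u|^{-n}/M_n$, where $M_n=\sqrt{\lambda_s^{-2n}a^2+\lambda_u^{-2n}b^2}$. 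Since $|\lambda_s|^{-1}>1>|\lambda_u|^{-1}$, the stable term dominates $M_n$ for large $n$, so this expression is bounded above by a constant multiple of $|\lambda_u|^{-n}/|a|$, which tends to zero. Hence $d_n$ is uniformly bounded in $n$, and combining with $\|H(y)-y\|\leq C$ produces a uniform $R$ such that $\mathcal{F}_n(p)$ lies in the $R$-neighbourhood of $L_n+A^{-n}(v)$ for all sufficiently large $n$.

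The main obstacle is the transverse-contraction estimate above; it depends crucially on $L\notin\mathcal{A}^u$, since if $L$ coincided with the unstable eigenline then $A^{-n}$ would stretch transverse displacements exponentially and the lemma would fail. The remaining work is routine bookkeeping of the translate $v$ for each leaf and verifying that the constants $C$, $K$, $|a|$, and $\lambda_u$ depend only on the fixed data $(A,\mathcal{F}^{\eps},L)$, not on the choice of leaf or of $p$.
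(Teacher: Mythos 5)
Your argument is correct, but it takes a genuinely different route from the paper. The paper never invokes the Franks semiconjugacy here: it proves, by linear algebra, a one-step contraction $A^{-1}(U_R(L_n))\subset U_{\alpha R}(L_{n+1})$ with $\alpha<1$ for all large $n$, observes that $f^{-1}$ differs from $A^{-1}$ by a bounded amount $K_0$, chooses $R$ with $\alpha R+K_0<R$, and then runs an induction on $n$ starting from the uniform closeness of $\mathcal{F}^{\varepsilon}$-leaves to translates of $L$. You instead transport the estimate in one shot: $H(y)=A^{-n}H(f^n(y))$ with $\|H-\mathrm{id}\|\leq C$, and then an explicit eigenbasis computation showing that $A^{-n}$ shrinks displacements transverse to translates of $L$ like $|\lambda_u|^{-n}/|a|$, where $a\neq 0$ precisely because $L\notin\mathcal{A}^u$ (\cref{notunstable}). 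Both proofs rest on the same geometric mechanism (hyperbolicity of $A$ plus $L\notin\mathcal{A}^u$), and your use of $H$ is not circular, since Franks' theorem needs nothing from this section; the paper simply introduces $H$ later. What each buys: the paper's induction is more elementary (only the finite distance between $f$ and $A$) and needs no explicit spectral computation; yours imports a heavier tool but yields a sharper conclusion, namely that leaves of $\mathcal{F}_n$ lie within roughly $2C+o(1)$ of translates of $L_n$, not merely within a fixed $R$. Two small points to tidy: the eigenbasis of $A$ is generally not orthogonal, so the identification $A^{-n}=\mathrm{diag}(\lambda_s^{-n},\lambda_u^{-n})$ together with "projection onto the unit normal" is valid only up to a fixed change-of-basis constant, which is harmless but should be said; and you correctly fix one translate $v$ per leaf $\mathcal{F}^{\varepsilon}(f^n(p))$, which is what makes the conclusion hold for the whole leaf $\mathcal{F}_n(p)$ with a single translate $L_n+A^{-n}(v)$.
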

		\begin{proof}
			If $X\subset\bbR^2$ is a subset, define the $R$-neighbourhood of $X$ by $U_R(X) = \{p\in\bbR^2: \mathrm{dist}(p,X)<R\}$. Through basic linear algebra, one can show there exists $0<\alpha<1$ and $N\in\bbN$ such that
			\[
			A^{-1}(U_{R}(L_n)) \subset U_{\alpha R}(L_{n+1})
			\]
			for all $n>N$ and any $R>0$. Consider $n>N$. If $K_0>0$ is the distance from $f$ to $A$, then $f^{-1}$ maps an $R$-neighbourhood of $L_n$ into an $R\alpha+K_0$ neighbourhood of $L_{n+1}$. Since $\alpha<1$, we can choose $R$ large enough so that $R>R\alpha+K_0$, so the result follows by induction.
		\end{proof}
		 Define $\pi^s: \bbR^2 \to \bbR$ as a linear map whose kernel is the unstable eigenline of $A$ and which maps the leaves of $\mathcal{A}^s$ onto $\bbR$. Similarly define $\pi^u: \bbR^2 \to \bbR$. As $L_n$ approaches $L$ for large $n$, the preceding lemma implies that every curve of $\mathcal{F}^c$ lies close to a line of $\mathcal{A}^s$. By invariance, all forward iterates of curves are close to lines of $\mathcal{A}^s$. This property can be stated in terms of $\pi^u$.
		\begin{lem}\label{closetostable}
			There is $C>0$ such that for any curve $\mathcal{L} \in \mathcal{F}^c$, the interval $\pi^u(f^n(\mathcal{L})) \subset \bbR$ has length at most $C$.
		\end{lem}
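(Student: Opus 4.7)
The plan is to exploit the $f$-invariance of $\mathcal{F}^c$ to reduce the statement to a uniform bound on a single curve. Since $f(\mathcal{F}_n(p))=\mathcal{F}_{n-1}(f(p))$, each forward iterate $f^n(\mathcal{L})$ again lies in $\mathcal{F}^c$, so it suffices to produce a constant $C$ such that $\pi^u(\mathcal{L})$ has length at most $C$ for every $\mathcal{L}\in\mathcal{F}^c$. To get this bound I would show that each such $\mathcal{L}$ is contained in an $R$-neighbourhood of some leaf of $\mathcal{A}^s$; this suffices because the kernel of $\pi^u$ is the stable eigendirection of $A$, so $\pi^u$ is constant along leaves of $\mathcal{A}^s$ and sends an $R$-strip around such a leaf to an interval of length depending only on $R$ and $\pi^u$.

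To establish the geometric claim, write $\mathcal{L}=\lim_k\mathcal{F}_{n_k}(p)$ in the compact-open topology and apply \cref{limitneighbourhood}: for $k$ large there are translates $T_k$ with $\mathcal{F}_{n_k}(p)\subset U_R(T_k+L_{n_k})$, and since $p$ lies on the leaf, $T_k+L_{n_k}$ meets $\bar{U}_R(p)$. Because $A$ is hyperbolic and $L\notin\mathcal{A}^u$, the direction of $L_{n_k}=A^{-n_k}(L)$ converges to the stable eigendirection of $A$. Picking a point $q_k\in (T_k+L_{n_k})\cap \bar{U}_R(p)$ and passing to a subsequence for which both $q_k\to q_\ast$ and the directions converge, the lines $T_k+L_{n_k}$ converge on compact sets to a leaf $\ell\in\mathcal{A}^s$ through $q_\ast$. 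Passing the inclusion $\mathcal{F}_{n_k}(p)\subset U_R(T_k+L_{n_k})$ to the compact-open limit yields $\mathcal{L}\subset \bar{U}_R(\ell)$, and combined with the reduction above this gives the desired uniform bound on $\pi^u(f^n(\mathcal{L}))$.

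The main obstacle will be the control of the translates $T_k$. There is no a priori bound on their norms, so a naive attempt to pass to the limit of the approximating lines could fail. The key point is that the line $T_k+L_{n_k}$ is forced to pass within $R$ of the fixed point $p$, which pins a representative point of the line inside the compact set $\bar{U}_R(p)$; this anchoring, together with the directional convergence of $L_{n_k}$ to the stable eigendirection, is exactly what lets a subsequential limit of the lines exist as a well-defined leaf of $\mathcal{A}^s$.
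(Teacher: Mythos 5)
Your argument is correct and takes essentially the same route the paper intends: the lemma is stated there without a formal proof, being deduced from \cref{limitneighbourhood} together with the convergence of $L_n$ to the stable eigendirection and the $f$-invariance of $\mathcal{F}^c$, exactly as you do. Your anchoring of the translated lines inside $\bar{U}_R(p)$ before passing to the limit supplies the one detail the paper leaves implicit.
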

		Since $A$ is hyperbolic, we have the Franks semiconjugacy: the unique continuous surjective map $H: \bbR^2 \to \bbR^2$ that commutes with deck transformations and satisfies $A \circ H = H \circ f$ \cite{Franks1}. It follows that $H$ is a finite distance from the identity on $\bbR^2$.
		\begin{lem} \label{unstabletounstable}
			If $p\in\bbR^2$ and $q\in\mathcal{F}^u(p)$, then $H(q)\in\mathcal{A}^u(H(p))$.
		\end{lem}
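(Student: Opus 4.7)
The plan is to iterate backward under $f$ and exploit the contraction of $f^{-n}$ along unstable leaves, together with the boundedness of $H - \Id$, to force the stable component of $H(q) - H(p)$ to vanish.

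First, since $A$ is a hyperbolic linear toral endomorphism both of whose eigenvalues are nonzero, its lift to $\bbR^2$ is invertible. The lifted $f$ is invertible on $\bbR^2$ as well. Applying $A^{-1}$ on the left of $A \circ H = H \circ f$ and iterating, I get $H \circ f^{-n} = A^{-n} \circ H$ for every $n \geq 0$.

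Next, I use that $\mathcal{F}^u$ is the integral foliation of the uniformly expanding direction $E^u$ of $f$. Standard partially hyperbolic theory gives a constant $\lambda > 1$ such that $\|Df\, v\| \geq \lambda \|v\|$ for every $v \in E^u$. Hence $Df^{-1}$ contracts $E^u$ uniformly by $\lambda^{-1}$, so the arc length along $\mathcal{F}^u(p)$ between $f^{-n}(p)$ and $f^{-n}(q)$ decays at least like $\lambda^{-n}$. In particular $|f^{-n}(q) - f^{-n}(p)| \to 0$.

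Because $H$ lies a finite distance $K_1$ from the identity, I then obtain
\[
|A^{-n}(H(q)) - A^{-n}(H(p))| = |H(f^{-n}(q)) - H(f^{-n}(p))| \leq |f^{-n}(q) - f^{-n}(p)| + 2K_1,
\]
which is uniformly bounded in $n$. Decomposing $H(q) - H(p) = v^s + v^u$ along the stable and unstable eigenlines of $A$, I have $A^{-n}(v^s + v^u) = \lambda_s^{-n} v^s + \lambda_u^{-n} v^u$. Since $|\lambda_s| < 1$, the stable term would grow without bound unless $v^s = 0$. Therefore $H(q) - H(p)$ lies along the unstable eigenline of $A$, i.e.\ $H(q) \in \mathcal{A}^u(H(p))$.

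The main thing to verify carefully is the uniform contraction of $f^{-n}$ along $\mathcal{F}^u$-leaves; once that is in hand, the argument is a short three-line consequence of the semiconjugacy equation and the hyperbolic splitting of $A$.
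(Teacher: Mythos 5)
Your proof is correct and follows the paper's strategy: iterate backward under $f$, use the conjugacy relation $H \circ f^{-n} = A^{-n} \circ H$, the decay of $|f^{-n}(p) - f^{-n}(q)|$ along the unstable leaf, and the hyperbolicity of $A$ to rule out a stable component of $H(q) - H(p)$. The only minor difference is that the paper invokes uniform continuity of $H$ to get $|Hf^{-n}(p) - Hf^{-n}(q)| \to 0$, whereas you use the finite distance of $H$ from the identity to get a uniform bound; both are available and both suffice.
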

		\begin{proof}
			As $H$ descends to a map on $\bbT^2$, it is uniformly continuous. If $q\in\mathcal{F}^u(p)$, then $|f^{-n}(p)-f^{-n}(q)|\to 0$, which implies
			\[
			|A^{-n}H(p)-A^{-n}H(q)|= |Hf^{-n}(p)-Hf^{-n}(q)| \to 0.
			\]
			Then we must have $H(q)\in\mathcal{A}^u(H(p))$.
		\end{proof}
		As $H$ is a finite distance from the identity, it then follows that leaves of $\mathcal{F}^u$ lie close to lines of $\mathcal{A}^u$.
		\begin{cor}\label{closetounstable}
			There is $D>0$ such that if $\mathcal{L}\in\mathcal{F}^u$, then the interval $\pi^s(\mathcal{L})$ has length at most $D$.
		\end{cor}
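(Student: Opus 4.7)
The plan is to combine Lemma \ref{unstabletounstable}, which sends leaves of $\mathcal{F}^u$ into leaves of $\mathcal{A}^u$, with the fact that $H$ is a bounded distance from the identity. Since $\pi^s$ is linear with kernel equal to the unstable eigenline of $A$, it is constant on every leaf of $\mathcal{A}^u$. So the image of a leaf of $\mathcal{F}^u$ under $H$ has $\pi^s$ value equal to a single number, and the original leaf will have $\pi^s$ values in a bounded window around that number.

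First I would set $K := \sup_{x\in\bbR^2}|H(x)-x|$, which is finite because $H$ descends to a map on $\bbT^2$ and so has image of $\Id - H$ living in a compact set. Then, using that $\pi^s$ is a fixed linear map on $\bbR^2$, set $D := 2\|\pi^s\|K$, where $\|\pi^s\|$ is the operator norm.

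Next I would take any leaf $\mathcal{L}\in\mathcal{F}^u$ and any two points $p,q\in\mathcal{L}$. By \cref{unstabletounstable}, $H(p)$ and $H(q)$ lie on the same leaf of $\mathcal{A}^u$, so $\pi^s(H(p)) = \pi^s(H(q))$. Estimating,
\[
|\pi^s(p)-\pi^s(q)| \leq |\pi^s(p-H(p))| + |\pi^s(H(p))-\pi^s(H(q))| + |\pi^s(H(q)-q)| \leq 2\|\pi^s\|K = D,
\]
so $\pi^s(\mathcal{L})$ is an interval of length at most $D$, as required.

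There is no real obstacle here; the only mild care needed is to note that the leaves of $\mathcal{A}^u$ are precisely level sets of $\pi^s$ (by definition of $\pi^s$ as having kernel the unstable eigenline of $A$), which is what makes the middle term vanish. The corollary follows immediately once this is set up, and the constant $D$ depends only on the Franks bound $K$ and the chosen linear projection $\pi^s$.
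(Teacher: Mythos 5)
Your proof is correct and follows the same route the paper intends: apply Lemma~\ref{unstabletounstable} to place $H(\mathcal{L})$ inside a single leaf of $\mathcal{A}^u$ (on which $\pi^s$ is constant), then use the uniform bound on $|H - \Id|$ to control $\pi^s$ on the original leaf. The paper leaves this as an immediate corollary with exactly this reasoning sketched in the surrounding text, and your explicit constant $D = 2\|\pi^s\|K$ and triangle-inequality estimate fill in the details faithfully.
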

		Since the unstable and centre directions are transverse line fields on $\bbR^2$, we can say something about the structure between $\mathcal{F}^c$ and $\mathcal{F}^u$.
		\begin{prop}\label{uniquehit}
			If $\mathcal{L}^c \in \mathcal{F}^c$ and $\mathcal{L}^u\in\mathcal{F}^u$, then $\mathcal{L}^c$ cannot intersect $\mathcal{L}^u$ more than once.
		\end{prop}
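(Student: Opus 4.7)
My plan is to argue by contradiction using a planar holonomy argument on a bigon bounded by arcs of $\mathcal{L}^c$ and $\mathcal{L}^u$. Suppose that $\mathcal{L}^c \cap \mathcal{L}^u$ contains two distinct points. Since $E^c$ and $E^u$ are transverse continuous line fields on $\bbR^2$, every intersection is transverse and hence the intersection set is discrete. A standard minimality argument then produces a pair $p \neq q$ in $\mathcal{L}^c \cap \mathcal{L}^u$ such that the arcs $\alpha \subset \mathcal{L}^c$ and $\beta \subset \mathcal{L}^u$ joining them meet only at $p$ and $q$; then $\alpha \cup \beta$ is a Jordan curve bounding a closed topological disk $B$.

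The key step is to define a first-return map $\phi: \mathrm{int}(\alpha) \to \mathrm{int}(\alpha)$ using the foliation $\mathcal{F}^u$. For each $r \in \mathrm{int}(\alpha)$, the leaf $\mathcal{F}^u(r)$ is distinct from $\mathcal{L}^u = \beta$ and crosses $\alpha$ transversely at $r$. The connected component of $\mathcal{F}^u(r) \cap B$ containing $r$ is therefore an arc whose other endpoint cannot lie on $\beta$ (two distinct leaves of the foliation $\mathcal{F}^u$ are disjoint), so it must lie on $\alpha$; call it $\phi(r)$. The map $\phi$ is continuous (from continuous dependence of leaves of $\mathcal{F}^u$ in foliation charts), involutive by construction, and fixed-point free, since a fixed point would force the leaf-arc to be a closed loop, contradicting the fact that leaves of $\mathcal{F}^u$ are embedded copies of $\bbR$.

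To conclude, I would show that $\phi(r) \to q$ as $r \to p$ along $\alpha$, and symmetrically $\phi(r) \to p$ as $r \to q$. Parameterizing $\alpha$ by $[0,1]$ with $p \leftrightarrow 0$ and $q \leftrightarrow 1$, this extends $\phi$ continuously to $[0,1]$ with $\phi(0) = 1$ and $\phi(1) = 0$. The function $g(t) = \phi(t) - t$ is continuous with $g(0) > 0$ and $g(1) < 0$, so by the intermediate value theorem it has a zero in $(0,1)$, yielding a fixed point of $\phi$ and contradicting the previous paragraph.

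I expect the technical heart to be the endpoint behavior $\phi(r) \to q$ as $r \to p$. The cleanest way to justify this is to cover the compact arc $\beta$ by a finite chain of foliation boxes for $\mathcal{F}^u$ and check that any leaf entering the first box very close to $p$ must traverse the entire chain and exit the last box near $q$; at the corners $p, q$, the transverse meeting of $\alpha$ and $\beta$ is used to ensure the leaves truly enter $B$ near $p$ and exit near $q$ rather than curving back along $\alpha$. The argument is purely topological and does not appear to invoke the iteration bounds of \cref{closetostable} or \cref{closetounstable}; those are presumably employed elsewhere in the construction of the centre foliation.
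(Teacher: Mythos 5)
Your argument is correct, but it takes a different route from the paper. The paper disposes of the claim in one line by invoking the Poincar\'e--Bendixson theorem: a transversal of a planar foliation that meets the same leaf twice forces a closed leaf, and $\mathcal{F}^u$ has none since all its leaves are lines. Your proof is in effect a self-contained verification of exactly that corollary of Poincar\'e--Bendixson, via the innermost bigon and the first-return (holonomy) map on the transversal arc. Both arguments are purely topological and neither uses the quantitative lemmas on $\pi^s$ and $\pi^u$, as you correctly observe; what you lose in brevity against the paper's one-line citation, you gain in making the proof self-contained.

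One remark on the structure of your argument: the step you flag as the ``technical heart'' --- extending $\phi$ continuously to the endpoints $p$ and $q$ via a chain of foliation boxes along $\beta$ --- is not actually needed. Once you know that $\phi:\operatorname{int}(\alpha)\to\operatorname{int}(\alpha)$ is continuous, involutive and fixed-point free, you already have a contradiction without any boundary analysis. Indeed, on the connected set $\operatorname{int}(\alpha)\cong(0,1)$ the continuous function $g(t)=\phi(t)-t$ is nowhere zero, hence of constant sign; if $\phi(t)>t$ for all $t$, then $t=\phi(\phi(t))>\phi(t)>t$, and symmetrically if $\phi(t)<t$. So a continuous fixed-point-free involution of an open interval cannot exist, and you can delete the flow-box chain argument entirely. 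The only genuine technical inputs you need are that the leaves of $\mathcal{F}^u$ are properly embedded lines (so the component of $\mathcal{F}^u(r)\cap B$ through $r$ is a compact arc and not a loop), which the paper establishes, and that $\alpha$ and $\beta$ meet only at $p,q$, which follows by taking $p,q$ to be consecutive intersections along $\mathcal{L}^c$.
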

		\begin{proof}
			Suppose that a curve in $\mathcal{F}^c$ intersects a leaf of $\mathcal{F}^u$ more than once. Since this curve of $\mathcal{F}^c$ is transverse to $\mathcal{F}^u$, then by the Poincar\'e-Bendixson Theorem, $\mathcal{F}^u$ must have a leaf which is a circle. As $\mathcal{F}^u$ is a foliation consisting of lines, this is a contradiction.
		\end{proof}
		
		In the setting of partially hyperbolic diffeomorphisms in dimension 3, if we lift a centre-stable branching foliation to the universal cover, it intersects an unstable leaf in at most one point. From this, one can prove a `length versus volume' inequality for neighbourhoods of unstable leaves. Such an argument is given for example in \cite[Lemma 3.3]{BBI1} and \cite[Lemma 5.5]{sol}. In our setting of endomorphisms in dimension 2, we show that similar results hold.
		
		\begin{lem}\label{prevol}
			 There exist constants $\eps,\,\eta$ with $1>\eps>\eta>0$ such that if $J^u$ is an unstable segment of length $1$ through a point $p$, then
			\begin{itemize}
				\item if $q\notin J^u$ is a point which satisfies $|p-q|<\eta$, then any complete centre curve through $p$ intersects $\mathcal{F}^u(q)$, and
				\item any complete centre curve with endpoints on $J^u \cap B_{\eta}(p)$ and $\mathcal{F}^u(q)$ is contained within $B_{\eps}(p)$.
			\end{itemize}
		\end{lem}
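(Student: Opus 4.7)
The plan is to deduce both items from a uniform local product structure for the foliation $\mathcal{F}^u$ on $\bbR^2$. Because $Df$ commutes with deck transformations up to the fixed linear action of the linearisation, the invariant direction $E^u$ descends to a continuous line field on $\bbT^2$; together with the continuous centre line field $E^c$, compactness of $\bbT^2$ yields a uniform transversal angle $\theta>0$ between $E^c$ and every direction in $\Cone^u$. Standard foliation theory then gives a uniform radius $\delta_0>0$ such that at every $x\in\bbR^2$ there is a product chart $\phi_x:B_{\delta_0}(x)\to(-1,1)^2$ sending plaques of $\mathcal{F}^u$ to horizontal lines, under which any $C^1$ arc tangent to $E^c$ becomes a Lipschitz graph $y=\psi(t)$ over the horizontal coordinate with Lipschitz constant at most $\cot\theta$.

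Choose $\eps<\delta_0/2$ and then $\eta$ much smaller than $\eps\sin\theta$. For the first item, a complete centre curve $\mathcal{L}^c$ through $p$ is an embedded copy of $\bbR$ tangent to $E^c$, so in the chart $\phi_p$ its intersection with $B_{\delta_0}(p)$ is a Lipschitz graph that must extend across the full horizontal interval $(-1,1)$. For $q\in B_\eta(p)\setminus J^u$ the leaf $\mathcal{F}^u(q)$ appears in the chart as a horizontal plaque distinct from the one containing $J^u$, and the Lipschitz graph crosses this plaque, providing the required intersection (which is in fact unique by \cref{uniquehit}).

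For the second item, consider a centre sub-arc whose endpoints lie on $J^u\cap B_\eta(p)$ and on $\mathcal{F}^u(q)$. In the chart $\phi_p$ both endpoints lie within a vertical strip of width $O(\eta)$ about $\phi_p(p)$, so by the Lipschitz graph property the horizontal span of the sub-arc is also $O(\eta)$. The entire sub-arc therefore lies in a neighbourhood of $\phi_p(p)$ of diameter $O(\eta)$, which maps back into $B_\eps(p)$ once $\eta$ is chosen sufficiently small relative to $\eps$.

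The principal technical obstacle is establishing the uniform chart radius $\delta_0$: this requires a uniform modulus of continuity for $E^u$ on $\bbR^2$, obtained from its translation-invariance under deck transformations together with continuity on the compact base $\bbT^2$. Once the uniform product structure is in place, both conclusions reduce to elementary properties of Lipschitz graphs, and the length-$1$ hypothesis on $J^u$ enters only to guarantee that $J^u\cap B_\eta(p)$ is a genuine segment through $p$ for every small $\eta$.
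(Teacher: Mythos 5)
Your overall strategy---uniform transversality of the unstable and centre directions, compactness, and local product charts from which both items follow---is in the same spirit as the paper's proof, which simply descends to $\bbT^2$ and uses that the unstable cone family $\Cone^u$ is uniformly transverse to $E^c$. However, the step on which you base the uniformity is wrong: you assert that $E^u$ is deck-invariant and descends to a continuous line field on $\bbT^2$. For a non-invertible map this fails. The direction $E^u(x)$ on the cover is determined by the backward orbit of $x$ under the lifted diffeomorphism $f$, and $f^{-1}$ conjugates the deck translation by an integer vector $v$ to translation by $A^{-1}v$, which is in general not an integer vector when $f$ is non-invertible (degree at least $2$). Consequently $E^u(x+v)$ and $E^u(x)$ need not agree, and indeed the paper states explicitly, citing Przytycki, that $\mathcal{F}^u$ need not descend to $\bbT^2$. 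The repair is exactly what the paper does: work with the cone family $\Cone^u$, which \emph{is} defined on $\bbT^2$, lifts to a deck-invariant family on $\bbR^2$, and contains $E^u$ at every point of the cover; compactness of $\bbT^2$ then gives a uniform angle between $E^c$ and every direction in $\Cone^u$, and every leaf of $\mathcal{F}^u$ is tangent to the lifted cones. (If you genuinely need uniform continuity of $E^u$ on $\bbR^2$ for your charts, it must be obtained differently, e.g.\ from the uniform exponential narrowing of the cones $Df^n\bigl(\Cone^u(f^{-n}x)\bigr)$, not from deck-invariance.)

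A secondary slip: in a chart straightening $\mathcal{F}^u$ into horizontal plaques, uniform transversality makes centre arcs graphs over the \emph{vertical} coordinate, with slope relative to the horizontal bounded \emph{below} by $\tan\theta$; you instead describe them as Lipschitz graphs $y=\psi(t)$ over the horizontal with constant $\cot\theta$, which would allow a nearly horizontal centre curve, and then the claimed crossing of the nearby plaque of $\mathcal{F}^u(q)$ in the first item does not follow. With the transversality stated in the correct direction, the height along a centre curve through $p$ varies monotonically at a definite rate, so it meets the plaque of $\mathcal{F}^u(q)$ at height within $\eta$ of that of $p$, and the connecting centre arc in the second item has length $O(\eta)$, giving containment in $B_{\eps}(p)$. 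With these corrections your argument coincides with the paper's (brief) compactness proof of \cref{prevol}.
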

		\begin{proof}
			Descend to $\bbT^2$, where we have an unstable cone family $\Cone^u$ is transverse to $E^c$. Given a point $p$, we can find $\eta>0$ such that if $|p-q|>\eta$, then any curve tangent to $\Cone^u$ passing through $p$ intersects any sufficiently long centre segment through $q$. Moreover, if $\eta$ is small, this intersection is close to $p$ and $q$ with respect to distance along the curves. Compactness of $\bbT^2$ allows us to conclude the result.
		\end{proof}
		
		For an unstable segment $J$, recall that the unit neighbourhood of $J$ is defined by $U_1(J) = \{p\in\bbR^2: \mathrm{dist}(p,J)<1\}$. Then define $U_1^c(J)$ as the set of all $p \in U_1(J)$ such that there is a curve $\mathcal{L}\in\mathcal{F}^c$ passing through $p$ with $\mathcal{L} \cap J \neq \varnothing$.
		\begin{prop}\label{lengthvolume}
			There is $K>0$ such that if $J \subset \bbR^2$ is either an unstable segment or a centre segment, then
			\[
			\mathrm{volume}(U_1(J))> K \, \mathrm{length}(J).
			\] 
		\end{prop}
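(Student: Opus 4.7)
My strategy is a standard flow-box cover argument: partition $J$ into unit sub-arcs, place a small ball at each midpoint so that their union lies in $U_1(J)$, and use transversality together with Proposition \ref{uniquehit} to bound the multiplicity of the resulting cover. This is the ``length-versus-volume'' technique modelled on \cite[Lemma 3.3]{BBI1} and \cite[Lemma 5.5]{sol}.

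First I would fix $\eta > 0$ as in Lemma \ref{prevol}, taken uniform in $p$ by compactness of $\bbT^2$ and shrunk further if necessary so that $B_{2\eta}(p)$ is also a flow-box for $\mathcal{F}^u$ in the strong sense that each leaf of $\mathcal{F}^u$ meets $B_{2\eta}(p)$ in at most one connected arc, and so that there is a uniform positive lower bound $\theta$ on the angle between $E^c$ and $E^u$. If $\mathrm{length}(J) < 1$ the conclusion is immediate since $U_1(J)$ contains a unit ball, so I may assume $\ell := \mathrm{length}(J) \geq 1$. Parametrise $J$ by arclength, partition it into $N = \lfloor \ell \rfloor$ unit sub-arcs $J_i$ with midpoints $p_i$, and observe $B_\eta(p_i) \subset B_1(p_i) \subset U_1(J)$.

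The crux is to bound the multiplicity $M$ of the cover $\{B_\eta(p_i)\}$. Since the midpoints are spaced arclength $1$ apart along $J$, it suffices to show the total arclength of $J \cap B_{2\eta}(p_i)$ is at most a uniform constant. In the unstable case $J$ lies on a single leaf of $\mathcal{F}^u$, which by the choice of $\eta$ meets $B_{2\eta}(p_i)$ in a single arc, and transversality yields the bound. In the centre case $J \in \mathcal{F}^c$ is an embedded line that may re-enter the ball, but Proposition \ref{uniquehit} forbids $J$ from crossing any single leaf of $\mathcal{F}^u$ more than once; hence the connected components of $J \cap B_{2\eta}(p_i)$ occupy disjoint ranges of the transverse coordinate in the $\mathcal{F}^u$-flow-box, and transversality again yields the same bound.

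With the multiplicity bounded uniformly, I would conclude
\[
\mathrm{volume}(U_1(J)) \geq \mathrm{volume}\!\left(\bigcup_i B_\eta(p_i)\right) \geq \frac{\pi \eta^2 N}{M} \geq K \cdot \mathrm{length}(J)
\]
for a suitable $K > 0$. The main obstacle is the centre-segment multiplicity bound: since $\mathcal{F}^c$ has not yet been shown to form a foliation, the usual foliation-chart argument is not directly available, and Proposition \ref{uniquehit} is the essential substitute that prevents an embedded centre curve from spiralling back through a small ball too often.
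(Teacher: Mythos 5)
Your overall strategy is a legitimate variant of the paper's length-versus-volume argument, and your treatment of the centre case is in the right spirit: since $\mathcal{F}^c$ is not yet known to be a foliation, \cref{uniquehit} is indeed the correct substitute for a foliation-chart argument. However, the step you wave off is actually the one with the gap: the claim that $\eta$ can be chosen, ``by compactness of $\bbT^2$ and shrinking further,'' so that every leaf of $\mathcal{F}^u$ meets every ball $B_{2\eta}(p)$ in at most one arc. This strong flow-box property is a global statement about the foliation, not a local one, and it cannot be obtained by shrinking the radius: a foliation of $\bbT^2$ by curves tangent to a narrow cone family can have arbitrarily thin Reeb-type annuli, so its lift to $\bbR^2$ has leaves making U-turns that re-enter balls of every fixed radius in two (or more) components. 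At this stage of the paper nothing proved so far rules this out for $\mathcal{F}^u$; the statements that would (the quasi-isometric behaviour of unstable leaves in \cref{palmoff}, via \cref{largeunstable}) are proved \emph{after} and \emph{from} the present proposition, so assuming the flow-box property here risks circularity. The good news is that the gap is fixable with the tools you already invoke: if one unstable leaf had two distinct arcs inside $B_{2\eta}(p_i)$, then by \cref{prevol} (with constants chosen so that the guaranteed intersection occurs at leaf-distance less than the separation of the two arcs) a complete centre curve through a point of one arc would also cross the leaf near the other arc, giving two intersection points with a single leaf of $\mathcal{F}^u$ and contradicting \cref{uniquehit} --- exactly the mechanism you use for the centre case.

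For comparison, the paper's own proof avoids multiplicity and flow-box geometry altogether: it cuts $J$ into unit subsegments $J_1,\dots,J_n$, uses \cref{prevol} to give each centre-saturated set $U_1^c(J_k)$ volume at least a uniform $\delta>0$, and uses \cref{uniquehit} to conclude these sets are pairwise \emph{disjoint} (a centre curve through a point of two of them would meet $J$ twice), whence $\mathrm{volume}(U_1(J))>\delta n$. Saturating along the transverse (centre) direction rather than covering by balls is what lets the paper bypass the question of how often a single unstable leaf can revisit a small ball; your argument needs that question answered, so either adopt the saturation argument or supply the uniquehit-based bound sketched above for the unstable case as well.
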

		\begin{proof}
			We prove this for an unstable segment, the argument for a centre segment is similar. By the preceding lemma, there is $\delta > 0$ such that $U_1^c(J)>\delta$ for any unstable segment $J$ of length greater than $1$. If $\mathrm{length}(J) > n$, there are disjoints subcurves $J_1,...,J_n \subset J$ of length one. \cref{uniquehit} states that a centre segment can intersect $J^u$ at most once, so the sets $U_1^c,...,U_n^c$ are pairwise disjoint. Then $
			\mathrm{volume}(U_1(J))>\delta n$, so the claim follows.
		\end{proof}
		The preceding proposition is a `length versus volume' inequality. It implies that unstable segments must be large in the $\pi^u$-direction.
		\begin{lem}\label{largeunstable}
			For any $M>0$ there is $\ell>0$ such that any unstable segment $J^u$ of length greater than $\ell$ is such that the interval $\pi^u(J^u)$ has length greater than $M$. In particular, $J^u$ must admit points $p$ and $q$ such that 
			\[
			|\pi^u(p)-\pi^u(q)|>M.
			\]
		\end{lem}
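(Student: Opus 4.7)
The plan is to combine Corollary~\ref{closetounstable} with the length-versus-volume inequality of Proposition~\ref{lengthvolume}, and argue by contradiction. Suppose instead that there exist $M>0$ and a sequence of unstable segments $\{J^u_n\}$ with $\mathrm{length}(J^u_n)\to\infty$ yet $\pi^u(J^u_n)$ of length at most $M$ for every $n$.

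First I would use Corollary~\ref{closetounstable} to note that $\pi^s(J^u_n)$ has length at most $D$ for every $n$. Since $A$ is hyperbolic its stable and unstable eigenlines are distinct, so the pair $(\pi^s,\pi^u):\bbR^2\to\bbR^2$ is a linear isomorphism. Consequently each $J^u_n$ is contained in a parallelogram $P_n\subset\bbR^2$ whose area depends only on $M$, $D$, and the angle between the two eigenlines of $A$. The unit neighbourhood $U_1(J^u_n)\subset U_1(P_n)$ then has volume bounded above by some constant $V=V(M,D)$ that is independent of $n$.

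On the other hand, Proposition~\ref{lengthvolume} gives $\mathrm{volume}(U_1(J^u_n))>K\cdot\mathrm{length}(J^u_n)$, which tends to infinity, contradicting the uniform bound $V$. Taking $\ell>V/K$ yields the quantitative statement. The final clause (existence of points $p,q\in J^u$ with $|\pi^u(p)-\pi^u(q)|>M$) is immediate, since any interval in $\bbR$ of length greater than $M$ contains two points at distance greater than $M$. I do not anticipate a significant obstacle: once one recognises that the two previous estimates together confine $J^u$ to a set of bounded area in the $(\pi^s,\pi^u)$-coordinates, the contradiction with the linear growth of $\mathrm{volume}(U_1(J^u))$ is essentially formal.
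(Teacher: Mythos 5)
Your argument is correct and follows essentially the same route as the paper: both combine the $\pi^s$-bound of Corollary~\ref{closetounstable} with the length-versus-volume inequality of Proposition~\ref{lengthvolume} to trap $U_1(J^u)$ in a region of area controlled by $D$ and the $\pi^u$-extent, forcing that extent to grow with $\mathrm{length}(J^u)$. The paper just runs the estimate directly (obtaining $K\ell \leq (r+2)(D+2)$ and choosing $\ell$ large) rather than phrasing it as a contradiction along a sequence, which is an inessential difference.
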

		\begin{proof}
			Let $\ell$ be a constant to be chosen later, and let $J^u$ be an unstable curve of length greater than $\ell$. Let $r$ be the length of the interval $\pi^u(f^n(J^u))$. \cref{closetounstable} states that $J^u$ is bounded in the $\pi^s$ direction, so
			\[
			\mathrm{vol}(U_1 f^n(J^u))\leq (r+2)(D+2).
			\]
			Then by \cref{lengthvolume},
			\[
			K \ell \leq (r+2)(D+2).
			\]
			Hence if $M>0$, we can choose $\ell$ sufficiently large so that $r>M$.
		\end{proof}
		We have shown that curves in $\mathcal{F}^c$ and $\mathcal{F}^u$ can intersect at most once, and that unstable leaves are unbounded in the $\pi^u$ direction. Both of these results can be strengthened.
		\begin{prop}\label{palmoff}
			The unstable foliation has the following properties:
			\begin{enumerate}
				\item \label{itm:intersection} Every leaf of $\mathcal{F}^u$ intersects every curve of $\mathcal{F}^c$ exactly once.
				\item \label{item:longendpoint}For all $M>0$, there is $\ell>0$ such that if $J^u$ is an unstable segment of length greater than $\ell$, then its endpoints $p$ and $q$ satisfy
				\[
				|\pi^u(p)-\pi^u(q)|>M.
				\]
			\end{enumerate}
		\end{prop}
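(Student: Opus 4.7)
The plan is to prove (2) first by contradiction and then bootstrap from (2) to (1) via a properness argument on $\pi^u$ along unstable leaves. The key geometric input is that each centre curve $\mathcal{L}^c\in\mathcal{F}^c$ is trapped in a bounded $\pi^u$-strip around a stable line of $\mathcal{A}^s$: iterating \cref{closetostable} through the semiconjugacy identity $A^n\circ H = H\circ f^n$ and using that $H$ is a bounded distance from the identity, one obtains $\mathrm{diam}\,\pi^u(H(\mathcal{L}^c)) \leq (C+2K)/|\lambda_u|^n$ for every $n$. Letting $n\to\infty$ forces $H(\mathcal{L}^c) \subset L^s$ for a single stable line $L^s$, and so $\mathcal{L}^c \subset \{|\pi^u - c|\leq K_0\}$ where $c = \pi^u(L^s)$. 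Since $H$ is surjective and every point lies on some centre curve, one may select such a centre curve with $c$ equal to any prescribed real number. As $\mathcal{L}^c$ is a properly embedded line, its complement has two unbounded components $U_+, U_-$; the open half-planes $\{\pi^u > c+K_0\}$ and $\{\pi^u < c-K_0\}$ each lie in one of them, and I assert (the main nontrivial preliminary) the \emph{clean-separation} property that they lie in \emph{different} components, calling $U_+$ the one containing large $\pi^u$.

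Granting clean separation, statement (2) follows by contradiction. Assume $J^u$ has length greater than $\ell$ yet its endpoints $p, q$ satisfy $|\pi^u(p)-\pi^u(q)|\leq M$. By \cref{largeunstable}, choosing $\ell$ sufficiently large produces a point $r\in J^u$ with $\pi^u(r) > \max(\pi^u(p),\pi^u(q)) + 2K_0+2$ (or the symmetric case). Pick $c$ strictly between $\max(\pi^u(p),\pi^u(q))+K_0$ and $\pi^u(r)-K_0$, and let $\mathcal{L}^c$ be a centre curve at $\pi^u$-level $c$. Then $p, q \in U_-$ while $r\in U_+$, so parametrising $J^u$ from $p$ to $q$ through $r$ shows $J^u$ crosses $\mathcal{L}^c$ at least twice, contradicting \cref{uniquehit}.

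For (1), only non-emptiness of $\mathcal{L}^u\cap\mathcal{L}^c$ must be established. Parametrise $\mathcal{L}^u$ by $\gamma\colon\bbR\to\bbR^2$ and set $h(t)=\pi^u(\gamma(t))$. Statement (2), just proved, gives $|h(s)-h(t)|\leq M \Rightarrow |s-t|\leq\ell_M$, so $h$ is proper and $|h(t)|\to\infty$ as $|t|\to\infty$. To rule out $h(t)\to+\infty$ on both ends (the symmetric case being analogous), observe that $h$ would then attain a minimum $m$ and for each $n\geq 1$ the preimage $h^{-1}(m+n)$ would contain two parameters $t_1^n < t_2^n$ with $|t_2^n - t_1^n|\to\infty$ and $h(t_1^n)=h(t_2^n)$, contradicting (2). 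Hence $h$ surjects onto $\bbR$, so $\mathcal{L}^u$ contains points with $\pi^u > c+K_0$ and points with $\pi^u < c-K_0$; by clean separation these lie in opposite components of $\bbR^2\setminus\mathcal{L}^c$, forcing a crossing, and \cref{uniquehit} then gives exactly one.

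The main technical obstacle is the clean-separation assertion. It would fail for a hypothetical U-shaped $\mathcal{L}^c$ both of whose ends escape to the same $\pi^s$-infinity, since then both half-planes $\{\pi^u > c+K_0\}$ and $\{\pi^u < c-K_0\}$ would belong to the common unbounded outside-of-U component. Ruling this out reduces to showing $H(\mathcal{L}^c)$ equals the whole stable line $L^s$ rather than a half-line, and I expect to derive this by combining deck-invariance of $\mathcal{F}^c$ (which rules out periodic centre leaves because $L^s$ has irrational slope) with the fact that each approximating leaf $\mathcal{F}_n(p)\subset U_R(L_n)$ from \cref{limitneighbourhood} is $\pi^s$-unbounded in both directions, a property inherited by compact-open limits.
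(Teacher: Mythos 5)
Your explicit reductions are largely sound: the strip-trapping estimate obtained by pushing \cref{closetostable} through $A^n\circ H=H\circ f^n$ is correct, the deduction of item (2) from the clean-separation property together with \cref{largeunstable} and \cref{uniquehit} works, and the bootstrap from (2) to non-emptiness in (1) via properness of $\pi^u$ along an unstable leaf is a nice self-contained argument (the paper itself instead adapts Propositions 4.11--4.13 of \cite{nil}, so your route is genuinely different in form). But everything hangs on the clean-separation assertion, and that is exactly where the real content of the proposition sits; your sketch for it does not close the gap.

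Two concrete problems. First, properness of a curve in $\mathcal{F}^c$ is asserted rather than proved: the Arzel\`a--Ascoli limit is only known to be an embedded copy of $\bbR$ tangent to $E^c$, and an embedded line need not be properly embedded; properness can be extracted from \cref{uniquehit} together with local product structure with $\mathcal{F}^u$, but it is a genuine step. Second, and more seriously, the claim that $\pi^s$-unboundedness in both directions is ``inherited by compact-open limits'' is false as a general principle: convergence on compact parameter sets says nothing about the ends of the limit. One can exhibit properly embedded curves, each contained in $U_R$ of a translate of $L_n$ and $\pi^s$-unbounded in both directions, whose arc-length parametrisations converge on compacta to a U-shaped curve confined to a half-strip --- the return branch of each approximant simply escapes to parameter infinity, so nothing in the soft limit argument forbids the limit from doubling back. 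The deck-invariance remark only excludes periodic centre leaves and says nothing about the behaviour of the two ends of a single leaf. Hence the key statement you need --- that $H(\mathcal{L}^c)$ is the whole stable line, equivalently that the two ends of a centre curve escape to opposite $\pi^s$-infinities --- is left unproved, and establishing it requires the dynamical input (the interplay of \cref{uniquehit}, \cref{closetostable}, \cref{largeunstable} and the length--volume inequality) that the arguments cited from \cite{nil} are built on. As it stands, the hardest part of the proposition is precisely the part missing from your proof.
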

		\begin{proof}
			To prove these properties, we adapt the proofs of 4.11, 4.12 and 4.13 in \cite{nil}. In fact, the proofs there apply to our current setting with the following modifications:
			\begin{itemize}
				\item the universal cover is $\bbR^2$ instead of $\mathcal{H}$,
				\item we replace the centre-stable through a point $p$, denoted $W^{cs}(p)$, with \emph{any} curve of $\mathcal{F}^c$ which passes through $p$,
				\item Proposition 3.1, Lemma 4.7 and Lemma 4.10 of \cite{nil} are replaced by \cref{uniquehit}, \cref{largeunstable}, \cref{closetostable} of the current paper, respectively. \qedhere
			\end{itemize}
		\end{proof}
		
		Now suppose we have two curves in $\mathcal{F}^c$ which intersect. As we iterate these curves forward, both curves remain close in the $\pi^u$ direction. This contradicts that an unstable segment between the curves will grow large in the $\pi^u$ direction.
		\begin{prop}
			Curves in $\mathcal{F}^c$ are either disjoint or coincide.
			\label{uniqbackward}
		\end{prop}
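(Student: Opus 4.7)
The plan is to argue by contradiction, exploiting the tension between the boundedness of centre curves in the $\pi^u$ direction established in \cref{closetostable} and the unboundedness of long unstable segments in the $\pi^u$ direction established in \cref{palmoff}.

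Suppose $\mathcal{L}_1, \mathcal{L}_2 \in \mathcal{F}^c$ meet at a point $p$ but do not coincide. Then after relabelling we may pick a point $q \in \mathcal{L}_1 \setminus \mathcal{L}_2$. By part (1) of \cref{palmoff}, the unstable leaf $\mathcal{F}^u(q)$ meets $\mathcal{L}_2$ in a unique point $q'$, and since $q \notin \mathcal{L}_2$ we have $q' \neq q$. Thus there is a nontrivial unstable segment $J \subset \mathcal{F}^u(q)$ joining $q$ and $q'$.

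The next step is to iterate forward. Since $f$ lifts a local diffeomorphism of the compact torus and the unstable cone family is uniformly expanded by $Df$, the segment $J_n := f^n(J)$ satisfies $\mathrm{length}(J_n) \to \infty$. Applying part (2) of \cref{palmoff}, I conclude that for every $M > 0$ there exists $n$ with
\[
|\pi^u(f^n(q)) - \pi^u(f^n(q'))| > M.
\]
On the other hand, $f$ is a diffeomorphism of $\bbR^2$, so $f^n(p)$ is a single well-defined point lying on both $f^n(\mathcal{L}_1)$ and $f^n(\mathcal{L}_2)$, and by the $f$-invariance of the collection $\mathcal{F}^c$ each of these curves belongs to $\mathcal{F}^c$. \cref{closetostable} then supplies a uniform constant $C$ bounding the $\pi^u$-diameter of each. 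Since $f^n(q) \in f^n(\mathcal{L}_1)$ and $f^n(q') \in f^n(\mathcal{L}_2)$, the triangle inequality through $\pi^u(f^n(p))$ gives
\[
|\pi^u(f^n(q)) - \pi^u(f^n(q'))| \leq 2C
\]
for every $n$. Choosing $M > 2C$ produces the contradiction.

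The step requiring the most care is the claim that $\mathrm{length}(J_n) \to \infty$; this relies on a uniform lower bound $\|Df v^u\| \geq \lambda > 1$ for vectors $v^u$ in the unstable cone, which follows from compactness of $\bbT^2$ together with the strict inequality built into the definition of partial hyperbolicity. Beyond that, the argument is essentially bookkeeping, and note that it is exactly this proposition that promotes the a priori branching-type collection $\mathcal{F}^c$ into a genuine foliation, completing the proof of \cref{coherence}.
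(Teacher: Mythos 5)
Your proof is correct and follows essentially the same strategy as the paper's: contradict \cref{closetostable} (bounded $\pi^u$-diameter of centre curves, giving the $2C$ bound by the triangle inequality through $f^n(p)$) against part (2) of \cref{palmoff} (unbounded $\pi^u$-stretch of long unstable segments). You supply two details the paper leaves implicit — the existence of the unstable segment $J$ between $q$ and $q'$ via part (1) of \cref{palmoff}, and the unbounded growth of $\mathrm{length}(f^n(J))$ from uniform expansion in the unstable cone — both of which are needed and correctly justified.
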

		\begin{proof}
			Suppose there exist two distinct curves $\mathcal{L}_1,\, \mathcal{L}_2 \in \mathcal{F}^c$ which pass through the same point $p\in\bbR^2$. Consider the intervals $\pi^uf^n(\mathcal{L}_1)$ and $\pi^uf^n(\mathcal{L}_2)$. Since both of these intervals contain $\pi^uf^n(p)$, their union is an interval, and \cref{closetostable} implies that the length of this interval is at most $2C$. In particular, if $q_1\in\mathcal{L}_1$ and $q_2\in\mathcal{L}_2$ are points connected by an unstable segment, then $|\pi^uf^n(q_1)-\pi^uf^n(q_2)|$ is bounded. This contradicts the second property of \cref{palmoff}.
		\end{proof}
	
		We can now conclude that $f$ is dynamically coherent.
		
		\begin{proof}[Proof of \cref{coherence}]
			By \cref{uniqbackward}, the collection $\mathcal{F}^c$ gives a partition of $\bbT^2$. These curves are tangent to $E^c$, so $\mathcal{F}^c$ is a foliation by Remark 1.10 in \cite{BoWi}. This foliation is tangent to the centre direction and $f$-invariant. One can show that foliation is invariant under deck transformations, so $\mathcal{F}^c$ descends to the desired foliation on $\bbT^2$.
		\end{proof}
		We say that two foliations $\mathcal{F},\,\mathcal{G}$ of $\bbR^2$ have global product structure if each leaf of $\mathcal{F}$ intersects every leaf of $\mathcal{G}$ precisely once.
		\begin{prop}
			The foliations $\mathcal{F}^c$ and $\mathcal{F}^u$ have global product structure.
		\end{prop}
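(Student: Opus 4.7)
The plan is to derive this proposition almost immediately from the machinery already assembled, with essentially no new analytic content required. First I would recall the two ingredients: Proposition \ref{palmoff}(\ref{itm:intersection}) asserts that every leaf of $\mathcal{F}^u$ intersects every curve in the limit collection $\mathcal{F}^c$ in exactly one point, and Proposition \ref{uniqbackward} asserts that any two distinct curves of the limit collection $\mathcal{F}^c$ are disjoint.

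The main step is then to match the language of \ref{palmoff} against the statement to be proved. Proposition \ref{uniqbackward} together with Remark 1.10 of \cite{BoWi} (invoked in the proof of \cref{coherence}) promotes the limit collection $\mathcal{F}^c$ to a genuine foliation of $\bbR^2$; and because the curves in the limit collection are pairwise disjoint, each leaf of this foliation is itself a curve in the limit collection, and every curve in the limit collection is a leaf. Under this identification, the content of \ref{palmoff}(\ref{itm:intersection}) is exactly the statement that each leaf of $\mathcal{F}^u$ meets each leaf of $\mathcal{F}^c$ in precisely one point. This is the definition of global product structure given immediately above the proposition, so one concludes.

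The only point requiring any care is the bookkeeping identification between ``curves of $\mathcal{F}^c$'' (in the earlier statements, where $\mathcal{F}^c$ was still a collection of limit curves that might a priori branch) and ``leaves of $\mathcal{F}^c$'' (now that it is a foliation). This is handled entirely by \cref{uniqbackward}, which guarantees that the two notions coincide pointwise. I therefore expect the proof to be a one- or two-sentence citation of \ref{palmoff}(\ref{itm:intersection}) and \ref{uniqbackward}, with no genuine obstacle.
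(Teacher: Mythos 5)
Your proposal is correct and matches the paper's proof, which simply notes that the result follows immediately from the first claim of \cref{palmoff}; your extra remark identifying curves of $\mathcal{F}^c$ with leaves via \cref{uniqbackward} is just a careful spelling-out of the same step.
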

		\begin{proof}
			 This follows immediately from the first claim of \cref{palmoff}.
		\end{proof}
		
	\section{Leaf conjugacy} \label{lcsection}
		Having shown dynamical coherence, we now construct a leaf conjugacy to prove Theorem \ref{leafconj}. We retain the notation from section \ref{coherenceconstruction}. Recall that $H$ is the Franks semiconjugacy, and define the associated functions $H^s, H^u: \bbR^2 \to \bbR$ by $H^s = \pi^s\circ H$ and $H^u = \pi^u\circ H$. The leaf conjugacy will be constructed by an averaging of the semiconjugacy $H$ along centre leaves. We first need to establish properties of the foliations $\mathcal{F}^u$, $\mathcal{F}^c$ and the semiconjugacy $H$.

		\begin{prop}
			The restriction of $H^u$ to a leaf of $\mathcal{F}^u$ is a homeomorphism to $\bbR$.
		\end{prop}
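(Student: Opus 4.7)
The plan is to show that $H^u|_\mathcal{L}$ is a continuous, strictly monotone bijection onto $\bbR$; since $\mathcal{L}$ is homeomorphic to $\bbR$ and a continuous monotone bijection between copies of $\bbR$ is automatically a homeomorphism, this suffices. Continuity is immediate from continuity of $H$ and $\pi^u$.

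For injectivity I would use \cref{unstabletounstable}: $H$ sends $\mathcal{L}$ into a single leaf $\ell$ of $\mathcal{A}^u$, and $\pi^u$ is injective on $\ell$, so $H^u|_\mathcal{L}$ can fail to be injective only if two distinct points $p, q \in \mathcal{L}$ satisfy $H^u(p) = H^u(q)$. Because $\ker \pi^u$ is the stable eigenline of $A$, one has $\pi^u \circ A = \lambda_u \pi^u$ for the unstable eigenvalue $\lambda_u$, and combined with $H \circ f = A \circ H$ this yields
\[
H^u(f^n p) = \lambda_u^n H^u(p) = \lambda_u^n H^u(q) = H^u(f^n q)
\]
for all $n$. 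Since $H$ lies a bounded distance from the identity and $\pi^u$ is linear, $|\pi^u - H^u|$ is uniformly bounded, so the sequence $|\pi^u(f^n p) - \pi^u(f^n q)|$ stays bounded in $n$. On the other hand, uniform expansion along $\Cone^u$ (from $\|Dfv^u\|>1$ and compactness of $\bbT^2$) makes the length of $f^n$ of the unstable arc from $p$ to $q$ tend to infinity, so the second conclusion of \cref{palmoff} forces $|\pi^u(f^n p) - \pi^u(f^n q)| \to \infty$—a contradiction.

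For surjectivity, I parametrize $\mathcal{L}$ by arc length $\gamma : \bbR \to \mathcal{L}$ and apply the second conclusion of \cref{palmoff} to the subarc from $\gamma(0)$ to $\gamma(s)$: this yields $|\pi^u(\gamma(s))| \to \infty$ as $|s|\to\infty$, and the uniform bound on $|\pi^u - H^u|$ transfers the same to $H^u$. Combined with strict monotonicity (a continuous injection of $\bbR$ into $\bbR$ is strictly monotone), $H^u|_\mathcal{L}$ is a continuous strictly monotone map going to $\pm\infty$ at the two ends, hence a bijection onto $\bbR$; its inverse is then automatically continuous.

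The main obstacle is the injectivity step, which is where partial hyperbolicity genuinely enters. Continuity and the behaviour at infinity are essentially built into the setup, but turning $H^u(p) = H^u(q)$ into a contradiction requires iterating forward, exploiting the equivariance $\pi^u \circ A = \lambda_u \pi^u$, and playing the bounded $\pi^u$-spread of $f^n p, f^n q$ against the unbounded length of the connecting unstable arc via the second part of \cref{palmoff}.
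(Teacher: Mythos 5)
Your proof is correct and takes essentially the same route as the paper: injectivity is obtained by iterating forward, using the semiconjugacy together with the finite distance from the identity to bound $|\pi^u f^n(p)-\pi^u f^n(q)|$, and contradicting the growth of unstable segments in the $\pi^u$-direction (your appeal to the second part of \cref{palmoff} is, if anything, a slightly sharper citation than the paper's use of \cref{largeunstable}, since it controls the endpoints themselves). The only minor divergence is surjectivity, where the paper combines the fact that $H^s(\mathcal{L})$ is a point (\cref{unstabletounstable}) with $H$ being a finite distance from the identity, while you again use the second part of \cref{palmoff} plus monotonicity; both arguments amount to showing $H^u$ tends to $\pm\infty$ along the two ends of the leaf.
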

		\begin{proof}
			For $\mathcal{L}\in\mathcal{F}^u$, let $p$ and $q$ be points in $\mathcal{L}$. If $H^u(p)=H^u(q)$, then as $H$ is a semiconjugacy, $H^uf^n(q)=H^uf^n(p)$. Thus $|\pi^uf^n(p)-\pi^uf^n(q)|$ is bounded for all $n$, which is contradicted by \cref{largeunstable}. Hence $H^u$ is injective. \cref{unstabletounstable} implies that $H^s(\mathcal{L})$ is a point. By using this and the fact that $H$ is a finite distance from the identity, one can show that $H^u$ must be surjective.
		\end{proof}
		
		Now we can show that $H$ gives a bijection between leaves of $\mathcal{F}^c$ and those of $\mathcal{A}^s$.
		\begin{prop}\label{bijection}
			If $p \in \bbR^2$, then $q \in \mathcal{F}^c(p)$ if and only if $H(q) \in \mathcal{A}^s(H(p))$.
		\end{prop}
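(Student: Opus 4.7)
The goal is to show $q \in \mathcal{F}^c(p)$ is equivalent to $H(q) \in \mathcal{A}^s(H(p))$. Since $\pi^u$ vanishes on the stable eigenline of $A$ and is a coordinate transverse to $\mathcal{A}^s$, the condition $H(q) \in \mathcal{A}^s(H(p))$ is just the scalar equation $H^u(p) = H^u(q)$. So the plan is to reformulate the claim as $q \in \mathcal{F}^c(p) \iff H^u(p) = H^u(q)$ and handle each implication separately.

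For the forward direction, assume $q \in \mathcal{F}^c(p)$. By \cref{closetostable}, the set $\pi^u(f^n(\mathcal{F}^c(p)))$ has length at most $C$ for every $n \geq 0$, so in particular $|\pi^u f^n(p) - \pi^u f^n(q)| \leq C$ uniformly in $n$. The semiconjugacy relation $H \circ f^n = A^n \circ H$ gives $H^u \circ f^n = \pi^u \circ A^n \circ H$, and since $A$ is hyperbolic with unstable eigenvalue $\lambda_2$ of absolute value greater than one, $\pi^u \circ A^n = \lambda_2^n \pi^u$. Therefore
\[
\lambda_2^n\bigl(H^u(p) - H^u(q)\bigr) = H^u f^n(p) - H^u f^n(q).
\]
Since $H$ is a finite distance from the identity, $|H^u f^n - \pi^u f^n|$ is uniformly bounded, so the right-hand side is bounded independently of $n$. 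As $|\lambda_2| > 1$, this forces $H^u(p) = H^u(q)$, as required.

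For the reverse direction, assume $H^u(p) = H^u(q)$; we wish to show $q \in \mathcal{F}^c(p)$. By the global product structure of $\mathcal{F}^c$ and $\mathcal{F}^u$ (the first clause of \cref{palmoff}), the unstable leaf $\mathcal{F}^u(q)$ meets $\mathcal{F}^c(p)$ in a unique point $p'$. The forward direction, applied to $p$ and $p'$, gives $H^u(p') = H^u(p) = H^u(q)$. Since $p'$ and $q$ both lie on the unstable leaf $\mathcal{F}^u(q)$ and the restriction of $H^u$ to any unstable leaf is a homeomorphism onto $\bbR$ by the preceding proposition, injectivity yields $p' = q$; hence $q \in \mathcal{F}^c(p)$.

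The main obstacle is the forward direction, where one must convert the bounded $\pi^u$-spread of $f^n(\mathcal{F}^c(p))$ along the centre leaf into an exact equality for $H^u$; the trick is to use the exponential expansion of $A$ in the unstable direction to amplify any nonzero discrepancy and contradict the uniform bound coming from \cref{closetostable}. Once this is in hand, the reverse direction is an immediate consequence of the already-established global product structure and of $H^u$ being injective on unstable leaves.
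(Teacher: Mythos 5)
Your proof is correct and follows essentially the same route as the paper: the forward direction uses \cref{closetostable} together with the unstable expansion of $A$ (you argue directly via $\lambda_2^n(H^u(p)-H^u(q))$ being bounded, where the paper phrases the same amplification argument as a contradiction), and the reverse direction invokes global product structure and the injectivity of $H^u$ on unstable leaves exactly as in the paper.
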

		\begin{proof}
			We first prove the forward implication. Suppose that there exist points $p, q \in \bbR^2$ such that $q \in \mathcal{F}^c(p)$ but $H(q) \notin \mathcal{A}^s(H(p))$. Since $H(p)$ and $H(q)$ lie in different lines of $\mathcal{A}^s$, then
			$\sup_n|\pi^u A^n(Hp) - \pi^u A^n(Hq)|= \infty.$
			As $H$ is a semiconjugacy which is a finite distance from the identity,
			\[
			\sup_n|\pi^u f^n(p) - \pi^u f^n(q)| = \infty.
			\]
			However $p$ and $q$ lie in the same centre leaf. By \cref{closetostable} this implies that
			\[
			\sup_n|\pi^u f^n(p) - \pi^u f^n(q)|<\infty,\] giving a contradiction.
			
		Now suppose that the converse does not hold. Then $H$ must map two distinct leaves of $\mathcal{F}^c$ to the same line in $\mathcal{A}^s$. Then by global product structure, there are points $p,\,q\in\bbR^2$ such that $q\in\mathcal{F}^u(p)$ but $H^u(q)=H^u(p)$. This contradicts that the restriction of $H^u$ to $\mathcal{F}^u(p)$ is a homeomorphism.
		\end{proof}
		
		Similar to how long unstable segments are large the $\pi^u$-direction, long centre segments must be large in the $\pi^s$-direction.
		\begin{lem}\label{increase}
			There is $T>0$ such that if $J^c$ is a centre leaf segment of length $T$ then the endpoints $p$ and $q$ satisfy
			\[\
			|\pi^s(p)-\pi^s(q)|>1.
			\]
		\end{lem}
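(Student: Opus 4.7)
The plan is to mimic the proof of Lemma \ref{largeunstable}, with centre segments replacing unstable ones and $\pi^s$ replacing $\pi^u$. The main tool is again the length-versus-volume bound from Proposition \ref{lengthvolume}: for a centre segment $J^c$ of length $\ell$ with endpoints $p,q$, I will bound the volume of $U_1(J^c)$ from above in terms of $|\pi^s(p)-\pi^s(q)|$, compare with the lower bound $K\ell$, and then choose $T$ so that $|\pi^s(p)-\pi^s(q)|>1$ is forced whenever $\ell\geq T$.

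First I would establish that $\pi^s$ is strictly monotonic along every centre curve in $\mathcal{F}^c$. Since $\ker\pi^s$ is the unstable eigendirection of $A$, this is equivalent to $E^c$ being everywhere transverse to that eigendirection. I would argue that the unstable eigendirection lies in the interior of the unstable cone $\Cone^u$, using $Df$-invariance of $\Cone^u$ together with the fact that leaves of $\mathcal{F}^u$ are uniformly close to translates of unstable eigenlines (Corollary \ref{closetounstable}). Because $E^c$ is transverse to $\Cone^u$ and $\bbT^2$ is compact, this gives uniform transversality of $E^c$ to the unstable eigendirection; then for any oriented parametrisation of a centre curve the component $d\pi^s/dt$ is continuous and nowhere zero, hence of constant sign. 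Consequently $\pi^s(J^c)$ is exactly the interval from $\pi^s(p)$ to $\pi^s(q)$, of length $|\pi^s(p)-\pi^s(q)|$.

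The remaining steps are a routine analogue of Lemma \ref{largeunstable}. Lemma \ref{closetostable} (applied with $n=0$ to the centre leaf containing $J^c$) bounds $\pi^u(J^c)$ by an interval of length at most $C$, so in the basis of eigendirections of $A$ the neighbourhood $U_1(J^c)$ is contained in a rectangle of area at most a fixed constant times $(C+2)\bigl(|\pi^s(p)-\pi^s(q)|+2\bigr)$. Combining with $K\ell \leq \mathrm{vol}(U_1(J^c))$ from Proposition \ref{lengthvolume} gives
\[
K\ell \;\leq\; \mathrm{const}\cdot(C+2)\bigl(|\pi^s(p)-\pi^s(q)|+2\bigr),
\]
from which the desired $T$ is obtained by a direct calculation. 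The hard part will be the first step: verifying that $E^c$ is uniformly bounded away from the unstable eigendirection. The analogous monotonicity of $\pi^u$ on unstable leaves is implicit in Lemma \ref{largeunstable}, but for centre curves the direction of $E^c$ is not a priori controlled by the linearisation, and a careful argument combining cone invariance with the global structure of $\mathcal{F}^u$ established in \cref{palmoff} will be required.
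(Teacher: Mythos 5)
The volume-counting half of your plan is sound, but it only delivers the analogue of \cref{largeunstable}: a centre segment of length $\ell$ contains \emph{some} pair of points whose $\pi^s$-values differ by a lot. The entire content of the lemma, which concerns the \emph{endpoints}, is carried by your first step, and that step fails. Neither $Df$-invariance of $\Cone^u$ nor \cref{closetounstable} controls tangent directions pointwise: \cref{closetounstable} says a leaf of $\mathcal{F}^u$ stays within a bounded $\pi^s$-distance of a translate of the unstable eigenline, and a curve can do this while its tangent line at individual points is arbitrary, so there is no reason the constant $A$-unstable eigendirection should lie in $\Cone^u(p)$ at every $p$, nor that $E^c$ is uniformly (or even pointwise) transverse to it. Indeed the claim that $d\pi^s/dt$ is nowhere zero along centre curves is false in general: conjugating a linear hyperbolic automorphism by a diffeomorphism isotopic to the identity that rotates tangent directions inside a small disc yields a partially hyperbolic map with the same linearisation whose centre direction at chosen points equals the $A$-unstable eigendirection, so $\pi^s$ restricted to a centre curve can have critical points and can backtrack locally. (Also, the "monotonicity of $\pi^u$ on unstable leaves" is not implicit in \cref{largeunstable}; the endpoint version for unstable segments is \cref{palmoff}(2), proved by adapting the arguments of the cited paper, not by pointwise transversality.)

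What the lemma needs is a coarse, global statement, and this is how the paper argues: by \cref{closetostable} the leaves of $\mathcal{F}^c$ track lines of irrational slope, so the quotient foliation on $\bbT^2$ has no compact leaves and is equivalent to the suspension of a circle map with irrational rotation number, which forces uniform progress of $\pi^s$ along leaves. Alternatively, your scheme can be repaired without pointwise transversality: by \cref{palmoff}(1) a centre leaf meets each unstable leaf exactly once, each unstable leaf separates the plane and has $\pi^s$-extent at most $D$ by \cref{closetounstable}, so $\pi^s$ is monotone along a centre leaf up to an additive error of order $D$; feeding this coarse monotonicity into your volume estimate with $M$ somewhat larger than $1+2D$ does give the endpoint statement. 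As written, however, the step you yourself flag as the hard part rests on a claim that is not a consequence of the quoted results and is false for some maps satisfying the hypotheses.
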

		\begin{proof} 
			The centre foliation on $\bbR^2$ quotients down to a foliation on $\bbT^2$ which is equivalent to the suspension of a circle map with irrational rotation number.
			
		\end{proof}

		We now use the averaging method as described in \cite[Section 8]{hp-survey} to create a leaf conjugacy $h$ on $\bbR^2$. We begin by defining $h$ on a given centre leaf $\mathcal{L} \in \mathcal{F}^c$. Let $\alpha: \bbR\to\bbR^2$ be an arc length parametrisation of $\mathcal{L}$. For $p \in \mathcal{L}$, let $s = \alpha^{-1}(p)$. Let $T$ be as in \cref{increase} and define $h(p)$ as the unique point in $H(\mathcal{L})$ which satisfies
		\[
		\pi^s h(p) = \frac{1}{T}\int_0^T \pi^s \alpha(s+t)\, dt.
		\]
		
		One can show that $h(p)$ is independent of the choice of parametrisation.
		
		\begin{lem}\label{diffeo}
			For any $\mathcal{L}\in\mathcal{F}^c$, the map $h: \mathcal{L} \to H(\mathcal{L})$ is a $C^1$ diffeomorphism. 	
		\end{lem}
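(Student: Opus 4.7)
The plan is to reduce the claim to showing a single real-valued function is a $C^1$ diffeomorphism, and then deduce this from \cref{increase} via the fundamental theorem of calculus. By \cref{bijection}, $H(\mathcal{L})$ is a single leaf of $\mathcal{A}^s$, and the restriction of $\pi^s$ to any such leaf is a linear homeomorphism onto $\bbR$. Since $\alpha : \bbR \to \mathcal{L}$ is also a homeomorphism, it suffices to show that the map $g : \bbR \to \bbR$ defined by $g(s) := \pi^s(h(\alpha(s)))$ is a $C^1$ diffeomorphism; the conclusion for $h$ then follows by conjugating back through $\alpha$ and $(\pi^s|_{H(\mathcal{L})})^{-1}$.

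Changing variables $u = s+t$ in the defining formula for $h$ gives
\[
g(s) = \frac{1}{T}\int_0^T \pi^s\alpha(s+t)\,dt = \frac{1}{T}\int_s^{s+T}\pi^s\alpha(u)\,du.
\]
Because $\pi^s\circ\alpha$ is continuous, the fundamental theorem of calculus will yield that $g$ is $C^1$ with
\[
g'(s) = \tfrac{1}{T}\bigl(\pi^s\alpha(s+T)-\pi^s\alpha(s)\bigr).
\]

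The main step is then to control $g'$. The points $\alpha(s)$ and $\alpha(s+T)$ are the endpoints of a centre segment of length exactly $T$, so \cref{increase} immediately gives $|g'(s)|>1/T$ for every $s\in\bbR$. Since $g'$ is continuous and nowhere zero, the intermediate value theorem forces $g'$ to have constant sign, so $g$ is strictly monotone, and the uniform bound $|g'|>1/T$ forces $g(s)\to\pm\infty$ as $s\to\pm\infty$, giving surjectivity. Hence $g$ is a $C^1$ bijection with nowhere-vanishing derivative, i.e.\ a $C^1$ diffeomorphism of $\bbR$, and transporting through the parametrisations $\alpha$ and $\pi^s|_{H(\mathcal{L})}$ completes the proof. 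I do not expect a serious obstacle beyond this sign argument; the only subtle point to verify is that \cref{increase} indeed applies uniformly to every centre segment of length $T$ with no dependence on the base point $\alpha(0)$, which is exactly what the parametrisation-independence of $h$ (noted just above the lemma) guarantees.
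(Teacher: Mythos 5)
Your proposal is correct and follows essentially the same route as the paper: apply the fundamental theorem of calculus to $\pi^s h(\alpha(s))=\frac{1}{T}\int_0^T\pi^s\alpha(s+t)\,dt$ and bound the derivative away from zero using \cref{increase}. The paper compresses the remaining steps into ``the result follows,'' whereas you spell out the sign-constancy, monotonicity and surjectivity arguments, which is a faithful elaboration rather than a different method.
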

		\begin{proof}
			By the fundamental theorem of calculus we have
			$$
			\frac{d}{dt}(\pi^s h (\alpha(t_0)))
			= \frac{1}{T}(\pi^s \alpha(t_0 + T) - \pi^s \alpha(t_0))\\
			> \frac{1}{T},
			$$
			where the final inequality uses \cref{increase}. The result follows.
		\end{proof}
		
		By defining $h$ on every leaf of $\mathcal{F}^c$, we obtain a map $h:\bbR^2 \to \bbR^2$. This map is the desired leaf conjugacy.
		
		\begin{proof}[Proof of \cref{leafconj}]
			By our construction,  $h:\bbR^2 \to \bbR^2$ is a bijection which takes leaves of $\mathcal{F}^c$ to leaves of $\mathcal{A}^s$ and satisfies $h(f(\mathcal{L})) = A(h(\mathcal{L}))$ for $\mathcal{L}\in\mathcal{F}^c$. By \cref{diffeo}, $h$ is continuous when restricted to a leaf of $\mathcal{F}^c$, and by continuity of the semiconjugacy $H$ and the foliation $\mathcal{F}^c$, $h$ is continuous on all of $\bbR^2$. Since $H$ is surjective, as is $h$, so by the invariance of domain theorem, $h$ is a homeomorphism. Hence $h$ is a leaf conjugacy from $f$ to $A$ on $\bbR^2$. One can show that $h$ descends to the desired leaf conjugacy on $\bbT^2$.
			
		\end{proof}
	\section{Incoherent example} \label{incosection}
		 When the linearisation of $f$ is not hyperbolic, then $f$ is not necessarily dynamically coherent. We construct an example of a partially hyperbolic endomorphism with non-hyperbolic linearisation which does not admit a centre foliation, proving Theorem \ref{incoherence}. This construction is inspired by the incoherent partially hyperbolic diffeomorphism on $\bbT^3$ in \cite{RHRHU-incoherent}.
		 Consider a Morse-Smale diffeomorphism $\Psi: \bbS^1 \to \bbS^1$ with fixed points at $0$ and $\frac{1}{2}$ such that $
		 \Psi'(0) <\frac{1}{2}$ and $\Psi'(\frac{1}{2}) > 2.$ Define $g:\bbS^1 \times\bbS^1 \to \bbS^1 \times \bbS^1$ by $g(x,y) = (2 x,\Psi(y))$. Observe that the expansion of $\Psi$ is stronger than the doubling map on the circle at $\frac{1}{2}$ but weaker at $0$.
		 The incoherent example is an explicit deformation of $g$. Define $f:\bbT^2\to\bbT^2$ by
		 \[
		 f(x,y) = (2 x + \cos(2\pi y)+1, \Psi(y)).
		 \]
		 
		 We will show that $f$ is partially hyperbolic by showing that it admits an invariant partially hyperbolic splitting $E^u \oplus E^c$. We seek a semiconjugacy $h: \bbT^2 \to \bbS^1$ so that $h \circ f = 2 h$. The preimage of points in $\bbS^1$ under $h$ will be centre curves of $f$. If we seek $h$ in the form $h(x,y) = x-u(y)$ for some function $u: \bbS^1 \to \bbR$, then $u$ must satisfy
		 \begin{equation}
		 u(\Psi(y)) - 2 u(y) = \cos(2\pi y)+1. \label{tce}
		 \end{equation}
		 \begin{lem}
		 	The equation (\ref{tce}) admits solutions $\beta,\, \gamma: \bbS^1 \to \bbR$ given by
		 	\begin{align*}
		 	\beta(y) = \frac{1}{2}&\sum_{k=1}^{\infty}2^k\left( \cos(2\pi\Psi^{-k}(y))+1\right),\\
		 	\gamma(y) = -\frac{1}{2}&\sum_{k=0}^{\infty}2^{-k} \left( \cos(2\pi\Psi^{k}(y))+1\right).
		 	\end{align*}
		 	The function $\beta$ is not well-defined at $0$, but is well defined and $C^1$ on $\bbS^1\setminus\{0\}$. The function $\gamma$ is well-defined and continuous on all of $\bbS^1$ and is $C^1$ on $\bbS^1\setminus\{1/2\}$. 
		 	\label{cohom}
		 \end{lem}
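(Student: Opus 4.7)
The plan is to verify the functional equation (\ref{tce}) for each series formally by a shift of summation index, and then to treat convergence and $C^1$ regularity separately on the indicated domains. Write $\phi(y) := \cos(2\pi y)+1$, and note that $\phi$ has a double zero at $y=1/2$ while $\phi'$ has a simple zero there. Let $\lambda := \Psi'(0) < 1/2$ denote the contraction rate of $\Psi$ at its attracting fixed point $0$, and $\mu := 1/\Psi'(1/2) < 1/2$ the contraction rate of $\Psi^{-1}$ at its attracting fixed point $1/2$.

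For $\gamma$, reindexing $\gamma(\Psi(y))$ and subtracting $2\gamma(y)$ telescopes immediately to $\phi(y)$. Since $\|\phi\|_\infty \le 2$, the defining series is dominated termwise by $2\cdot 2^{-k}$, so it converges absolutely and uniformly on $\bbS^1$ and defines a continuous function. For $C^1$ regularity on a compact $K\subset \bbS^1\setminus\{1/2\}$, I would differentiate termwise; the $k$-th derivative is proportional to $2^{-k}\phi'(\Psi^k(y))(\Psi^k)'(y)$. Every orbit $\Psi^k(y)$ with $y\in K$ eventually enters a neighbourhood of the attracting fixed point $0$ on which $\Psi' \le (1+\lambda)/2 < 1/2$, so $(\Psi^k)'(y)\le C_K((1+\lambda)/2)^k$ uniformly on $K$, and the derivative series is dominated by a convergent geometric series.

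For $\beta$, the analogous reindexing again verifies (\ref{tce}); the hard part is convergence. Since $1/2$ is attracting for $\Psi^{-1}$ with rate $\mu<1/2$, one has $|\Psi^{-k}(y)-1/2|\le C\mu^k$ eventually for every $y\ne 0$. The double zero of $\phi$ at $1/2$ then gives $|\phi(\Psi^{-k}(y))|\le C'\mu^{2k}$, so the $k$-th term of $\beta$ is bounded by $C''(2\mu^2)^k$. By hypothesis $2\mu^2 < 1$, so the series converges absolutely and uniformly on compact subsets of $\bbS^1\setminus\{0\}$, producing a continuous function there. Conversely $\Psi^{-k}(0)=0$ for all $k$, making the $k$-th term identically $2^k$, so the series diverges at $0$.

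For $C^1$ regularity of $\beta$ on $\bbS^1\setminus\{0\}$, termwise differentiation gives a series whose $k$-th term is proportional to $2^k\phi'(\Psi^{-k}(y))(\Psi^{-k})'(y)$. The simple zero of $\phi'$ at $1/2$ combined with the backward-orbit estimate yields $|\phi'(\Psi^{-k}(y))|\le C\mu^k$, while uniform hyperbolicity of $\Psi^{-1}$ near $1/2$ gives $(\Psi^{-k})'(y)\le C\mu^k$ on compact subsets away from $0$, so the $k$-th derivative term is again dominated by $(2\mu^2)^k$ and the derivative series converges uniformly on compacts. The main obstacle throughout is leveraging the quadratic vanishing of $\phi$ at $1/2$ to defeat the $2^k$ factor in $\beta$; this is precisely where the assumption $\Psi'(1/2)>2$ is used, since without it $2\mu^2$ would fail to be less than $1$.
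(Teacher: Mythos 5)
Your proof is correct and follows essentially the same route as the paper, whose proof is only the remark that the series and their termwise derivatives can be bounded by geometric series using the conditions on $\Psi'$ at $0$ and $1/2$. Two cosmetic slips, neither of which affects the argument: the bound $(1+\lambda)/2$ is not $<1/2$ (any ratio $<1$, indeed $<2$, suffices for the $\gamma$ series), and $2\mu^2<1$ would already follow from $\Psi'(1/2)>\sqrt{2}$, so your closing remark overstates where the hypothesis $\Psi'(1/2)>2$ is sharp.
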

		 
		 	The preceding lemma can be proved by bounding the series and their termwise derivatives by geometric series using the conditions on $\Psi'$ at $0$ and $1/2$.

		 We now define subbundles $E^c, E^u \subset T\bbT^2$. Let
		 \begin{itemize}
		 \item $E^u(x,y)$ be spanned by $(\beta'(y),1)$ for $y\neq 0$, and by $(1,0)$ for $y=0$,
		 \\
		 \item $E^c(x,y)$ be spanned by $(\gamma'(y),1)$ for $y\neq \frac{1}{2}$, and  by $(1,0)$ for $y=\frac{1}{2}$.
		 \end{itemize}
		 We will show that these define an invariant partially hyperbolic splitting.
		 \begin{lem}
		 	The bundles $E^u$ and $E^c$ are continuous.
		 \end{lem}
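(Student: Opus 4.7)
The plan is to separate continuity into the easy case away from the singular point and the hard case at the singular point itself. On $\bbS^1 \setminus \{0\}$, $\beta'$ is continuous by \cref{cohom}, so $E^u$ is continuous there, and likewise $E^c$ is continuous on $\bbS^1 \setminus \{1/2\}$. The substance of the lemma is therefore to prove $|\beta'(y)| \to \infty$ as $y \to 0$ and $|\gamma'(y)| \to \infty$ as $y \to 1/2$, which forces $\mathrm{span}(\beta'(y),1)$ and $\mathrm{span}(\gamma'(y),1)$ to converge projectively to the prescribed direction $\mathrm{span}(1,0)$ at each singular point.

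My approach is a cone-contraction argument driven by the $Df$-invariance of $E^u$ and $E^c$. Differentiating equation (\ref{tce}) gives $\Psi'(y)\beta'(\Psi(y)) = 2\beta'(y) - 2\pi\sin(2\pi y)$, which combined with
\[
Df_{(x,y)} = \begin{pmatrix} 2 & -2\pi\sin(2\pi y) \\ 0 & \Psi'(y) \end{pmatrix}
\]
shows $Df \cdot E^u = E^u \circ f$, and similarly for $E^c$. For $E^u$ at $y=0$, the matrix $Df_{(x,0)} = \mathrm{diag}(2,\Psi'(0))$ with $\Psi'(0) < 1/2$ maps any horizontal cone $\mathcal{C}_L = \{(v_1,v_2) : |v_2| \le L|v_1|\}$ strictly into itself with contraction factor $\Psi'(0)/2 < 1/4$; by continuity, $Df_{(x,y)}(\mathcal{C}_L) \subset \mathcal{C}_{\rho L}$ with some $\rho < 1$ on a small neighbourhood $V$ of $0$ in $\bbS^1$. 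Given $y \in V$ close to $0$, I would write $y = \Psi^k(z)$ with $z \in \partial V$ and $k$ large (possible because $0$ attracts under $\Psi$); the forward orbit $z, \Psi(z), \ldots, \Psi^k(z) = y$ then lies entirely in $V$, so by invariance $E^u(y) = Df^k_z E^u(z) \in \mathcal{C}_{L\rho^k}$, provided $L$ is chosen larger than the slopes of $E^u$ at the finite set $\partial V$. Letting $y \to 0$ forces $k \to \infty$, so $E^u(y) \to \mathrm{span}(1,0) = E^u(0)$.

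The case of $E^c$ at $y = 1/2$ is formally dual. Here $Df_{(x,1/2)} = \mathrm{diag}(2, \Psi'(1/2))$ with $\Psi'(1/2) > 2$, so now it is $Df^{-1}$ (well defined locally, since $f$ is a local diffeomorphism) that contracts horizontal cones, by a factor close to $2/\Psi'(1/2) < 1$. Since $1/2$ is a repelling fixed point of $\Psi$, for $y$ close to $1/2$ the forward orbit $\Psi^j(y)$ remains in a small neighbourhood of $1/2$ until some large iterate $z = \Psi^k(y)$ exits; applying $(Df^k_y)^{-1}$ to $E^c(z)$ and iterating the cone contraction along the chain then forces $E^c(y) \to \mathrm{span}(1,0) = E^c(1/2)$. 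The one subtlety I expect to handle is ensuring that the starting direction $E^u(z)$ or $E^c(z)$ at $\partial V$ has finite slope, which requires $\beta'(z) \neq 0$ or $\gamma'(z) \neq 0$; this is achieved either by slightly adjusting the radius of $V$ to avoid the nowhere-dense zero sets of $\beta'$ and $\gamma'$, or by applying one preliminary $Df$-step, which sends any direction transverse to $\mathrm{span}(0,1)$ into a cone of bounded slope.
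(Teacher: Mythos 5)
Your reduction of the lemma to the two limits $|\beta'(y)|\to\infty$ as $y\to 0$ and $|\gamma'(y)|\to\infty$ as $y\to 1/2$ is exactly the right one, and it is what the paper does; but the paper then simply quotes this blow-up from the computation in Lemma 2.5(1) of \cite{RHRHU-incoherent} (an estimate of the differentiated series), whereas you propose to derive it dynamically from invariance via a cone-contraction argument along orbits of $\Psi$. That strategy is viable in principle, and the contraction estimates you set up at the fixed circles are correct (the relation $\Psi'(y)\beta'(\Psi(y)) = 2\beta'(y) - 2\pi\sin(2\pi y)$ and the invariance $Df\,E^{u}(y)=E^{u}(\Psi(y))$ are right, and the exit/entry times do tend to infinity as $y$ approaches the fixed point).

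The gap is the step you flag as a ``subtlety'': to start the cone iteration you need $E^u$ (resp.\ $E^c$) to lie in a horizontal cone $\mathcal{C}_L$ of \emph{uniform} width at every possible entry point, and these entry points do not form the two-point set $\partial V$ but a whole fundamental domain $\overline{V}\setminus\Psi(V)$ (a pair of intervals). So you need $\inf|\beta'|>0$ on that compact set, and neither of your proposed fixes delivers this. Adjusting the radius of $V$ only moves the boundary points; a nowhere dense zero set of $\beta'$ could still meet every interval of the form $[\Psi(a),a]$ (for instance, one zero in each fundamental domain), so nowhere density is not enough. The preliminary $Df$-step also fails as stated: the dangerous directions are the (near-)vertical ones, and applying $Df$ to $E^u(z)$ just produces $E^u(\Psi(z))=\mathrm{span}(\beta'(\Psi(z)),1)$, which is again near-vertical whenever $\beta'$ is small at $\Psi(z)$ --- the problem is relocated, not removed (indeed $\beta'(z)=\pi\sin(2\pi z)$ forces $\beta'(\Psi(z))=0$). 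What closes the gap is the observation that all terms of the differentiated series for $\beta'$ (resp.\ $\gamma'$) have the same strict sign on $(0,\tfrac12)$ and on $(\tfrac12,1)$, so $\beta'$ and $\gamma'$ vanish only at $\tfrac12$ and $0$ respectively; this is the same computation the paper uses later to prove transversality of $E^u$ and $E^c$, and it gives the uniform positive lower bound for $|\beta'|$, $|\gamma'|$ on your fundamental domains. With that input, your cone argument does prove the required limits and hence continuity.
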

		 \begin{proof}
		 	By \cref{cohom}, the bundles  $E^c$ and $E^u$ are continuous on $S$ except for the circles $y= \pi$ and $y =0$ respectively. In \cite{RHRHU-incoherent} (see the proof of Lemma 2.5(1)), it is proved that
		 	\[
		 	\lim_{y \to \pi}|\beta'(y)| =\lim_{y \to 0}|\gamma'(y)| =\infty.
		 	\]
		 	This ensures that both $E^c$ and $E^u$ are continuous at $\frac{1}{2}$ and $0$ respectively.
		 \end{proof}
		 \begin{lem}
				 The bundles $E^u$ and $E^c$ are transverse.
		 \end{lem}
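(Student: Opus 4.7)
The plan is to verify transversality separately at the two special values $y = 0$ and $y = 1/2$ (where the verification is immediate by inspection of the spanning vectors), and then reduce the remaining case to showing that $\beta'(y) \neq \gamma'(y)$ for all $y \in \bbS^1 \setminus \{0, 1/2\}$.

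At $y = 0$, $E^u$ is spanned by $(1,0)$ while $E^c$ is spanned by $(\gamma'(0), 1)$; these are linearly independent because the second coordinate of the first vector is zero and of the second is one. The argument at $y = 1/2$ is symmetric. So the main task is to handle $y \in \bbS^1 \setminus \{0, 1/2\}$, where $E^u$ and $E^c$ are spanned by $(\beta'(y), 1)$ and $(\gamma'(y), 1)$ respectively, so transversality is equivalent to $\phi(y) \neq 0$ for $\phi := \beta' - \gamma'$.

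To rule out zeros of $\phi$, I would differentiate the cohomological equation (\ref{tce}) at points where both $\beta$ and $\gamma$ are $C^1$, obtaining $u'(\Psi(y))\Psi'(y) - 2 u'(y) = -2\pi\sin(2\pi y)$ for $u \in \{\beta, \gamma\}$; subtracting yields the homogeneous relation
\[
\phi(\Psi(y))\,\Psi'(y) = 2\,\phi(y), \qquad y \in \bbS^1 \setminus \{0, 1/2\}.
\]
Since $\Psi$ preserves $\{0,1/2\}$ as the set of fixed points, this equation propagates along the full $\Psi$-orbit of any such $y$. Thus if $\phi(y_0) = 0$ for some $y_0 \in \bbS^1 \setminus \{0, 1/2\}$, then $\phi(\Psi^n(y_0)) = 0$ for every $n \geq 1$.

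Now the Morse-Smale hypothesis, together with $\Psi'(0) < 1/2$ and $\Psi'(1/2) > 2$, implies that $0$ is the unique attracting fixed point of $\Psi$ and every non-fixed orbit satisfies $\Psi^n(y_0) \to 0$. On the other hand, $\beta$ is $C^1$ in a neighbourhood of $0$, so $\beta'$ is bounded there, while the limit $\lim_{y \to 0}|\gamma'(y)| = \infty$ (quoted in the previous lemma) gives $|\phi(y)| \to \infty$ as $y \to 0$. This contradicts the existence of a sequence $\Psi^n(y_0) \to 0$ on which $\phi$ vanishes identically, completing the argument. The only subtle point is confirming the blowup of $|\gamma'|$ near $0$ coexists with boundedness of $\beta'$ near $0$, but this is exactly what is packaged in \cref{cohom} and the cited computation from \cite{RHRHU-incoherent}; the main obstacle is just the bookkeeping of which function blows up at which fixed point, rather than any genuinely hard estimate.
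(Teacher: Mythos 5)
Your route is genuinely different from the paper's. The paper disposes of the lemma with a direct sign argument: since $\Psi$ preserves the intervals $(0,\frac12)$ and $(\frac12,1)$, every term of the termwise-differentiated series for $\beta'$ and for $\gamma'$ has a fixed sign on each of these intervals, and the two signs are opposite, so $\beta'\neq\gamma'$ off the invariant circles (transversality on the circles themselves being immediate, as you also note). You instead differentiate \eqref{tce}, extract the homogeneous relation $\phi(\Psi(y))\Psi'(y)=2\phi(y)$ for $\phi=\beta'-\gamma'$, propagate a hypothetical zero along the forward $\Psi$-orbit into any neighbourhood of the attracting fixed point, and contradict the blow-up of $\phi$ there. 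This is closer in spirit to the argument of \cite{RHRHU-incoherent} than to the proof given here; it trades the explicit sign computation for a softer argument that only uses the cohomological equation together with the asymptotics of $\beta'$ and $\gamma'$ at the fixed points, and both are legitimate.

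There is, however, a concrete error in your bookkeeping, and as written it contradicts \cref{cohom}, which you cite: $\beta$ is \emph{not} well-defined at $0$ (the defining series diverges there), so it is not $C^1$ on a neighbourhood of $0$ with bounded derivative. In fact it is $|\beta'(y)|$ that tends to infinity as $y\to 0$ --- this is precisely what makes $E^u$, declared horizontal on $y=0$, continuous --- whereas $\gamma$ is $C^1$ near $0$ and its derivative blows up only at $y=\frac12$. (The displayed limits in the continuity lemma have these roles interchanged, and should read $\lim_{y\to 0}|\beta'(y)|=\lim_{y\to 1/2}|\gamma'(y)|=\infty$; you appear to have taken that line at face value.) The slip is self-correcting: with the roles fixed, near $y=0$ one has $\gamma'$ bounded and $|\beta'|\to\infty$, so $|\phi(y)|\to\infty$ as $y\to 0$ exactly as you need, and the contradiction with $\phi(\Psi^n(y_0))=0$, $\Psi^n(y_0)\to 0$ goes through. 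So your proof is correct once the two blow-ups are attributed to the correct functions; alternatively, you could run the same argument backwards along $\Psi^{-n}(y_0)\to\frac12$ using the blow-up of $\gamma'$ there.
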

		 \begin{proof}
		 	Since it is clear that $E^u$ and $E^c$ do not coincide on the invariant circles $y=0$ and $y=\frac{1}{2}$, it suffices to show that $\gamma'$ and $\beta'$ are not equal on $\bbS\setminus\{0,\frac{1}{2}\}$. From the definition of $\beta$ and $\gamma$, we see that $\beta'>0>\gamma'$ on $(0,\frac{1}{2})$ and $\beta'<0<\gamma'$ on $(\frac{1}{2},1)$, hence $\gamma'$ and $\beta'$ are not on $\bbS\setminus\{0,\frac{1}{2}\}$.
		 \end{proof}
	
		 \begin{lem}
		 	The splitting $T\bbT^2 = E^u \oplus E^c$ is Df-invariant.
		 \end{lem}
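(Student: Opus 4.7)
The plan is to verify invariance by direct computation from the cohomological equation~\eqref{tce}. In coordinates,
\[
Df(x,y) \;=\; \begin{pmatrix} 2 & -2\pi\sin(2\pi y) \\ 0 & \Psi'(y) \end{pmatrix},
\]
so that $Df$ depends only on the $y$-coordinate and therefore is compatible with $E^u$ and $E^c$ also depending only on $y$. The point of the proof is that the relation needed to show invariance of each bundle is precisely the derivative of the cohomological equation satisfied by $\beta$ or $\gamma$.

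First, on the set $\bbS^1\setminus\{0,\frac{1}{2}\}$ where both $\beta$ and $\gamma$ are $C^1$, differentiating $u(\Psi(y)) - 2u(y) = \cos(2\pi y)+1$ for $u\in\{\beta,\gamma\}$ yields
\[
\Psi'(y)\, u'(\Psi(y)) \;=\; 2u'(y) - 2\pi\sin(2\pi y).
\]
Substituting this into the right-hand side of
\[
Df(x,y) \begin{pmatrix} u'(y) \\ 1 \end{pmatrix} \;=\; \begin{pmatrix} 2u'(y) - 2\pi\sin(2\pi y) \\ \Psi'(y) \end{pmatrix}
\;=\; \Psi'(y)\begin{pmatrix} u'(\Psi(y)) \\ 1 \end{pmatrix}
\]
shows that $Df$ sends the spanning vector of $E^u(x,y)$ (resp.\ $E^c(x,y)$) to a scalar multiple of the spanning vector of the corresponding bundle at $f(x,y)$. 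This handles invariance of $E^u$ on $\{y\neq 0\}$ and of $E^c$ on $\{y\neq \frac{1}{2}\}$.

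It remains to check the two fixed circles of $\Psi$. At both $y=0$ and $y=\tfrac{1}{2}$ we have $\sin(2\pi y)=0$, so $Df$ becomes the diagonal matrix $\mathrm{diag}(2,\Psi'(y))$, which preserves both the horizontal and vertical directions; and $\Psi$ fixes these points, so the target bundle is spanned by the same formula. At $y=0$, the bundle $E^u$ is declared to be $\mathrm{span}(1,0)$, which is trivially preserved by the diagonal $Df$. For $E^c(x,0) = \mathrm{span}(\gamma'(0),1)$, evaluating the differentiated cohomological equation for $\gamma$ at $y=0$ gives $(\Psi'(0)-2)\gamma'(0)=0$; since $\Psi'(0)<\tfrac{1}{2}$, this forces $\gamma'(0)=0$, so $E^c(x,0)=\mathrm{span}(0,1)$, which is again preserved. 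A symmetric argument at $y=\tfrac{1}{2}$ uses $\Psi'(\tfrac{1}{2})>2$ to force $\beta'(\tfrac{1}{2})=0$, so that $E^u(x,\tfrac{1}{2})=\mathrm{span}(0,1)$ and $E^c(x,\tfrac{1}{2})=\mathrm{span}(1,0)$ are both preserved by $\mathrm{diag}(2,\Psi'(\tfrac{1}{2}))$. The only mildly delicate step is the bookkeeping at these two points, where the defining formulas for the bundles switch; but the vanishing of $\sin(2\pi y)$ at the fixed points of $\Psi$ plus the algebraic identity above makes everything collapse.
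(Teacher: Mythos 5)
Your proof is correct and takes the same approach as the paper: the paper's own proof is a one-line observation that differentiating the cohomological equation \eqref{tce} gives invariance, together with the remark that $y=\tfrac12$ is an invariant circle tangent to $E^c$. You have carried out exactly that computation in full, including the verification at the two fixed circles (where $\sin(2\pi y)=0$ and the differentiated equation forces $\gamma'(0)=0$ and $\beta'(\tfrac12)=0$), which the paper leaves implicit.
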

		 \begin{proof}
		 	First observe that the circle $y=\frac{1}{2}$ is an invariant circle tangent to $E^c$. To see that $E^c$ is invariant elsewhere, let $p = (x,y)\in\bbT^2\setminus \{y=\frac{1}{2}\}$ and observe that derivative of $f$ is given by
		 	\[
		 	D_pf =
		 	\left(
		 	\begin{matrix}
		 	2  &-2\pi \sin(2\pi y)\\
		 	0 &\Psi'(y)
		 	\end{matrix}
		 	\right)
		 	.
		 	\]
		 	By differentiating the equation \eqref{tce}, it is straightforward to show that $E^u$ and $E^c$ are invariant.
		 \end{proof}
		 
		An outline of proof for the following result is given for example in \cite[Proposition 15.1]{ergcomp} for the invertible setting, the reasoning for our setting follows identically.
		 \begin{lem} \label{NW}
		 	Let $f$ be a partially hyperbolic endomorphism of $\bbT^2$ with an invariant splitting $TM = E^c \oplus E^u$. If for every point $p$ in the non-wandering set we have
		 	\begin{equation}
		 	\| D_pf\vert_{E^c}| < \| D_pf\vert_{E^u}\|, \quad 1<\|D_pf \vert_{E^u}\|, \label{NWe}
		 	\end{equation}
		 	then the above inequalities hold for all $p\in M$.
		 \end{lem}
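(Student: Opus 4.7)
The plan is to upgrade the $\Omega(f)$-level inequalities to pointwise ones on all of $\bbT^2$ in two steps: first an ergodic averaging argument that controls iterates $f^N$ uniformly in $p$, then passage to an adapted Riemannian metric. Define
\[
\phi(p) = \log\|D_pf|_{E^u}\|, \qquad \psi(p) = \log\|D_pf|_{E^u}\| - \log\|D_pf|_{E^c}\|,
\]
both continuous on $\bbT^2$ since $E^c \oplus E^u$ is a continuous splitting. By hypothesis and compactness of $\Omega(f)$, there is $\delta > 0$ with $\phi,\psi \geq \delta$ on $\Omega(f)$.

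First I would show that there exists $N \in \bbN$ such that
\[
\sum_{k=0}^{N-1}\phi(f^k(p)) \geq N\delta/2 \quad \text{and} \quad \sum_{k=0}^{N-1}\psi(f^k(p)) \geq N\delta/2
\]
for every $p \in \bbT^2$. Every $f$-invariant Borel probability measure $\mu$ is supported in $\Omega(f)$, so $\int\phi\,d\mu \geq \delta$ and $\int\psi\,d\mu \geq \delta$. If no such $N$ existed for $\phi$, the empirical measures of a sequence of points violating the $N$-step bound would, by a Krylov-Bogolyubov-style compactness argument, have an $f$-invariant weak-$*$ accumulation point with $\phi$-integral at most $\delta/2$, contradicting the previous estimate; the same argument handles $\psi$. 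Because $E^u$ and $E^c$ are one-dimensional and $Df$-invariant, the chain rule converts these Birkhoff sums into the uniform iterate bounds
\[
\|D_pf^N|_{E^u}\| \geq e^{N\delta/2}, \qquad \|D_pf^N|_{E^u}\| \geq e^{N\delta/2}\|D_pf^N|_{E^c}\|,
\]
for every $p \in \bbT^2$.

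The second step converts these iterate inequalities into the single-step inequalities claimed in the lemma by replacing the Riemannian metric on $T\bbT^2$ with an equivalent adapted one. The standard recipe is a Mather-type weighted sum $\|v\|_* = \sum_{k=0}^{N-1}\tau^{-k/N}\|D_pf^k v\|$ with $\tau \in (1, e^{\delta/2})$, applied separately on $E^u$ and on $E^c$; a short telescoping computation uses the iterate bounds above to produce the pointwise strict inequalities $1 < \|Df|_{E^u}\|_*$ and $\|Df|_{E^c}\|_* < \|Df|_{E^u}\|_*$ on all of $\bbT^2$. Since partial hyperbolicity is a metric-independent property, the conclusion of the lemma then holds on $\bbT^2$. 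I expect the main obstacle to be verifying the uniformity of the Birkhoff averaging in the first step; the adapted-metric construction is a routine telescoping calculation once the iterate inequalities are in hand.
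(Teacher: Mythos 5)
Your argument is essentially the standard proof of this fact and is sound in substance; note that the paper does not actually prove the lemma in-house (it only points to an outline in a cited reference for the invertible setting and asserts the reasoning carries over), and your proposal supplies precisely the kind of argument such references give: every $f$-invariant measure is supported in $\Omega(f)$ (Poincar\'e recurrence needs no invertibility, so this survives for endomorphisms), hence Birkhoff sums of $\log\|D_pf|_{E^u}\|$ and of the domination cocycle are uniformly positive for some fixed $N$ (your Krylov--Bogolyubov contradiction argument is correct; just note you need one common $N$ for both functions, e.g.\ by passing to a multiple), and the one-dimensionality of the bundles turns these sums into uniform $N$-step inequalities. Two caveats. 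First, what this yields is the pair of inequalities for the iterate $f^N$, equivalently for $f$ in an equivalent adapted metric; it does not, and cannot, give the one-step inequalities at wandering points in the originally given metric, since nothing on $\Omega(f)$ constrains a single derivative off $\Omega(f)$. That adapted-metric (or iterate) reading is exactly how the lemma is used in the paper---to conclude that $f$ is partially hyperbolic---but your closing sentence should be phrased that way rather than as the literal pointwise conclusion in the ambient metric. Second, the Mather-type weighted sum ``applied separately on $E^u$ and on $E^c$'' gives the expansion inequality $1<\|Df|_{E^u}\|_*$, but not by itself the domination inequality: for that you should apply the same telescoping trick to the scalar ratio cocycle $\|D_pf|_{E^c}\|/\|D_pf|_{E^u}\|$ (immediate here since both bundles are line bundles) and use the resulting weight to rescale the norm on $E^c$ relative to that on $E^u$. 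With these adjustments your proof is complete and matches the standard route the paper implicitly invokes.
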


		 \begin{prop}
		 	The endomorphism $f:\bbT^2\to\bbT^2$ is partially hyperbolic.
		 \end{prop}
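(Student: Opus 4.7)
The plan is to invoke \cref{NW} to reduce the partial hyperbolicity inequalities to the non-wandering set $\Omega(f)$, and then check the inequalities directly on $\Omega(f)$, which lies inside the two invariant circles $y=0$ and $y=1/2$.

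First I would locate $\Omega(f)$. Since the second coordinate of $f$ is the Morse--Smale map $\Psi$, whose non-wandering set is exactly the fixed points $\{0, 1/2\}$, any non-wandering point of $f$ must have $y$-coordinate in $\{0, 1/2\}$. In particular, $\Omega(f)$ is contained in the union of the two $f$-invariant circles $C_0 = \{y=0\}$ and $C_{1/2} = \{y=1/2\}$. Since the matrix form of $D_pf$ depends only on $y$ (the factor $\sin(2\pi y)$ vanishes at both $0$ and $1/2$), on each of these circles the derivative is the diagonal matrix
\[
D_pf = \begin{pmatrix} 2 & 0 \\ 0 & \Psi'(y) \end{pmatrix}.
\]

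Next I would evaluate $E^u$ and $E^c$ on these circles using the series formulas from \cref{cohom}. Differentiating $\gamma$ term-by-term gives
\[
\gamma'(y) = \pi \sum_{k=0}^\infty 2^{-k} \sin\!\left(2\pi \Psi^k(y)\right) (\Psi^k)'(y),
\]
so $\gamma'(0) = 0$ since every summand vanishes. Similarly $\beta'(1/2) = 0$. Therefore on $C_0$ the splitting becomes $E^u = \mathrm{span}(1,0)$ and $E^c = \mathrm{span}(0,1)$, while on $C_{1/2}$ we have $E^c = \mathrm{span}(1,0)$ and $E^u = \mathrm{span}(0,1)$. In both cases $E^u$ and $E^c$ are the coordinate axes, which agrees with the prescription given at the exceptional values of $y$.

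With these simplifications the inequalities are immediate: on $C_0$, $\|D_pf|_{E^u}\| = 2$ and $\|D_pf|_{E^c}\| = |\Psi'(0)| < 1/2$, so $1 < \|D_pf|_{E^u}\|$ and $\|D_pf|_{E^c}\| < \|D_pf|_{E^u}\|$; on $C_{1/2}$, $\|D_pf|_{E^u}\| = |\Psi'(1/2)| > 2$ and $\|D_pf|_{E^c}\| = 2$, so again both inequalities hold. Since $\Omega(f) \subset C_0 \cup C_{1/2}$, this verifies the hypothesis \eqref{NWe} of \cref{NW}, which then upgrades the inequalities to all of $\bbT^2$. Combined with the continuous $Df$-invariant transverse splitting $T\bbT^2 = E^u \oplus E^c$ already established, this gives partial hyperbolicity of $f$.

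The only non-routine step is the observation $\gamma'(0) = \beta'(1/2) = 0$, which is where the specific form of the coboundary $\cos(2\pi y) + 1$ comes in: if it were replaced by a more general function, the shared direction might persist at the fixed points and the diagonalisation of $Df$ in the $(E^u, E^c)$ frame on the invariant circles would fail. Apart from that, the argument is a direct computation plus the reduction to $\Omega(f)$ via \cref{NW}.
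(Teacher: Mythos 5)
Your proof is correct and takes essentially the same approach as the paper's: reduce to the non-wandering set via \cref{NW}, identify $\Omega(f)\subset\{y=0\}\cup\{y=\tfrac{1}{2}\}$, and verify the inequalities \eqref{NWe} on those two invariant circles where $D_pf$ is diagonal. Your explicit computation that $\gamma'(0)=\beta'(\tfrac{1}{2})=0$, and that consequently $E^u$ and $E^c$ align with the coordinate axes on the invariant circles, is precisely the ``straightforward to show'' step the paper leaves to the reader.
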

		 \begin{proof}
		 	The non-wandering set of $f$ is the union of the invariant circles $y = 0$ and $y =1/2$. Since we can compute the derivative of $f$, it is straightforward to show that $E^c \oplus E^u$ satisfies inequalities (\ref{NWe}) on these circles. By \cref{NW}, these inequalities hold on $\bbT^2$, so $f$ is partially hyperbolic.
		 \end{proof}
		 
		 We can now prove our final result.
		 \begin{proof}[Proof of \cref{incoherence}]
		 	We will compute all the integral curves of $E^c$. For $p \in\bbT^2$, define a continuous curve $\sigma_p : [0,1] \to \bbT^2 $ by
		 	\begin{equation*}
		 	\sigma_p(t) = p + (\gamma(t),t).
		 	\end{equation*}
		 	The collection of curves $\sigma_p$ for each $p$ are $C^1$ and tangent to $E^c$ on the region $\bbT^2 \setminus \{y=\frac{1}{2}\}$. Since $E^c$ is $C^1$ on this domain, the collection is precisely the integral curves of $E^c$ on $\bbT^2 \setminus \{y=\frac{1}{2}\}$. The invariant circle $y=\frac{1}{2}$ is the remaining integral curve of $E^c$. Since $\gamma'$ is negative on $(0,\frac{1}{2})$ and positive on $(\frac{1}{2},1)$, the integral curves of $E^c$ then form a branching foliation with curves that coincide on $y=\frac{1}{2}$, as depicted in \cref{branching}. This implies a foliation tangent to the centre direction cannot exist.
		 \end{proof}
	 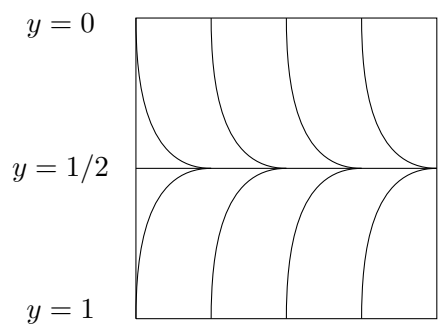
\begin{figure} 
	 	\centering
	 	\begin{tikzpicture}
	 	
	 	\draw (0,0)  -- (4,0) -- (4,4) -- (0,4) -- (0,0);
	 	\draw (0,0) to[out=90,in=180] (1,2);
	 	\draw (0,4) to[out=-90,in=180] (1,2);
	 	\draw (1,0) to[out=90,in=180] (2,2);
	 	\draw (1,4) to[out=-90,in=180] (2,2);
	 	\draw (2,0) to[out=90,in=180] (3,2);
	 	\draw (2,4) to[out=-90,in=180] (3,2);
	 	\draw (3,0) to[out=90,in=180] (4,2);
	 	\draw (3,4) to[out=-90,in=180] (4,2);
	 	\draw (0,2)  -- (4,2);
	 	\draw (-1,0.1) node {$y=1$};
	 	\draw (-1,2) node {$y=1/2$};
	 	\draw (-1,3.9) node {$y=0$};
	 	
	 	\end{tikzpicture}
	 	\caption{The centre branching foliation.}
	 	\label{branching}
	 \end{figure}
 
 	\acknowledgement 
 	
	\printbibliography
	
\end{document}